\documentclass[10pt,reqno]{amsart}
\usepackage{amsmath, latexsym, amsfonts, amssymb,amsthm, amscd,epsfig,enumerate}
\usepackage{subfigure,color}

\pagestyle{plain}

\advance\hoffset -.75cm

\oddsidemargin=.4cm
\evensidemargin=.4cm
\textwidth=16.5cm
\textheight=22cm

\usepackage[usesnames]{xcolor}

\definecolor{refkey}{gray}{.75}
\definecolor{labelkey}{gray}{.75}


\newcommand{\N}{\mathbb N}
\newcommand{\Prob}{\mathbb P}

\newcommand{\cF}{\mathcal{F}}

\newcommand{\cH}{\mathcal{H}}

\newcommand{\pr}{\mathbb P}
\newcommand{\T}{\mathbb T}

\newcommand{\ident}{{\mathchoice {\rm 1\mskip-4mu l} {\rm 1\mskip-4mu l}
{\rm 1\mskip-4.5mu l} {\rm 1\mskip-5mu l}}}


\newtheorem{teo}{Theorem}[section]
\newtheorem{lem}[teo]{Lemma}
\newtheorem{cor}[teo]{Corollary}
\newtheorem{rem}[teo]{Remark}
\newtheorem{pro}[teo]{Proposition}
\newtheorem{defn}[teo]{Definition}
\newtheorem{exmp}[teo]{Example}

\newtheorem{assump}[teo]{Assumption}

\setlength{\topmargin}{-0.35in}
\setlength{\textheight}{8.5in}   
\setlength{\textwidth}{6.0in}    
\setlength{\oddsidemargin}{0.5in}
\setlength{\evensidemargin}{0.5in}
\setlength{\headheight}{26pt}
\setlength{\headsep}{8pt}

\title
{Strong local survival of branching random walks \\ is not monotone}

\author[D.~Bertacchi]{Daniela Bertacchi}
\address{D.~Bertacchi, Dipartimento di Matematica e Applicazioni,
Universit\`a di Milano--Bicocca,
via Cozzi 53, 20125 Milano, Italy.}
\email{daniela.bertacchi\@@unimib.it}

\author[F.~Zucca]{Fabio Zucca}
\address{F.~Zucca, Dipartimento di Matematica,
Politecnico di Milano,
Piazza Leonardo da Vinci 32, 20133 Milano, Italy.}
\email{fabio.zucca\@@polimi.it}

\date{}

\begin{document}

\begin{abstract}
The aim of this paper is the study of the  strong local survival property for
 discrete-time and continuous-time branching random walks. We study this property
by means of an infinite dimensional generating function $G$ and a 
maximum principle which, we prove, is satisfied
by every fixed point of $G$. 
We give results about the existence of a strong local survival regime and 
we prove that, unlike local and global survival, in continuous time, strong local survival is not a monotone property
in the general case (though it is monotone if the branching random walk is quasi transitive). 
We provide an example of an irreducible branching random walk
where the strong local property depends on the starting site of the process.
By means of other counterexamples we show that the existence of a pure global phase is not equivalent
to nonamenability of the process, and that even an irreducible branching random walk with the same branching law
at each site may exhibit non-strong local survival. Finally we show that the generating function
of a irreducible BRW can have more than two fixed points; this disproves a previously known result.
\end{abstract}


\maketitle
\noindent {\bf Keywords}: branching random walk, branching process, strong local survival, recurrence, generating function,
maximum principle.

\noindent {\bf AMS subject classification}: 60J05, 60J80.

\section{Introduction}
\label{sec:intro}

%
A branching process is a very simple population model (introduced in \cite{cf:GW1875})
where particles breed and die (independently of each other) according to some random law.
At any time, this process is completely characterized by the total number of particles alive.
Branching random walks (in short, BRWs) add space to this picture: particles live in a spatially structured environment
and the reproduction law, which may depend on the location, not only tells how many children the particle
has, but also where it places them. The state of the process, at any time, is thus described by the
collection of the numbers of particles alive at $x$, where $x$ varies among the possible sites.
%
In the literature one can find BRWs both in continuous and discrete time.
The continuous-time setting has been studied by many authors
(see \cite{cf:HuLalley, cf:Ligg1, cf:Ligg2, cf:MadrasSchi, cf:MountSchi} just to name a few) along with some
variants of the process (see \cite{cf:BL, cf:BBZ, cf:BLZ, cf:BPZ, cf:BZ3}).
The discrete-time case has been initially considered as a natural
generalization of branching processes
(see \cite{cf:AthNey, cf:Big1977, cf:Big1978, cf:BigKypr97, cf:BigRah05, cf:Harris63}).
The definition of discrete-time BRW that we give in Section~\ref{subsec:discrete}
is sufficiently general to include the discrete-time counterpart that 
every continuous-time BRW admits.
Since every continuous-time BRW and its discrete-time counterpart have the same asymptotic
behavior, it suffices to provide results for the discrete-time case. On the other hand,
continuous-time examples naturally yield discrete-time ones.
Our definition also includes as particular cases: BRWs with independent diffusion
(where particles are first generated and then dispersed independently according to
a diffusion matrix $P$, see Section~\ref{subsec:discrete} and equation~\eqref{eq:particular1});
BRWs with no death (where each particle has null probability of having no children);
BRWs whose total number of particles behaves as a branching process (where the law of the number
of offspring does not depend on the site, we call these BRWs locally isomorphic to a branching process,
see Section~\ref{subsec:FBRWs}).

The basic question which arises studying the BRW is whether it survives with positive 
probability and, in this case, if it visits a given  site infinitely many times. 
The first question
asks whether there is global survival, that is, with
positive probability at any time there is someone alive \textit{somewhere}); while the second question
deals with local survival, that is, whether with
positive probability the process returns infinitely many times to some fixed sites.
It is clear that the probability of global survival is larger or equal to the probability of
local survival. If the probability of global survival is strictly larger than the one of local
survival, then the latter may be positive or null. In the first case, we say that there is non-strong local survival, in the second case there is pure global survival.
When on the contrary, the probabilities of global and local survival
are equal and strictly positive, we say that the BRW has strong local survival.
Hence, strong local survival means that the events of local and global survival coincide
(but for a null probability set) and have positive probability.

The interest on the strong local behavior is fairly recent
(see for instance \cite{cf:GMPV09, cf:Muller08-2}).
The aim of this paper is to study some properties of the strong local survival, comparing
them with the corresponding ones of local and global survival.

As in the case of branching processes, the main tool is that probabilities of extinction are fixed points of 
an infinite-dimensional generating function $G$ (see Sections~\ref{subsec:genfun} for the definition
and \ref{subsec:survivalprob} for its link to the extinction probabilities).
It is worth noting that, unlike the branching process case, it is not true that $G$ has at most
two fixed points, even in the irreducible case (where with positive probability a particle
at site $x$ can have a progenies at site $y$, for all $x$ and $y$). 
Indeed, we prove this indirectly by providing examples of irreducible BRWs which survive locally
but with a smaller probability than the one of global survival (hence non-strong locally, see 
Examples~\ref{ex:nonstronglocalFBRW} and \ref{ex:nonstronglocalFBRW2}) and directly by an explicit construction
of three fixed points for the $G$ of a certain BRW (Remark~\ref{rem:Spataru}).
By Corollary~\ref{cor:finiteBRW} we have that in the irreducible case a
sufficient condition for the existence of at most two fixed points for $G$ is the finiteness of the
set of vertices.

In the particular case where there is no branching,
one gets a random walk and the role of $G$ and its fixed points is played 
by the transition matrix and the harmonic functions, respectively. It is thus natural to look
for a maximum principle in the context of branching random walks as well (see
Proposition~\ref{pro:maximumprinciple}).
As an application, we have that in the irreducible case, pure global survival is independent
of the starting vertex. This is also true for local and global survival, but it does not hold
for strong local survival, unless that the probability of having zero children is positive for all
sites or if the BRW is quasi transitive  (see Sections~\ref{subsec:FBRWs}
and \ref{subsec:survivalprob} and 
Corollary~\ref{pro:qtransitive}). Example~\ref{exm:irreduciblenotstrongeverywhere} shows that
we may have strong local survival starting from some vertices and non-strong local survival
starting from others.

The speed of reproduction of a continuous-time BRW is proportional to a positive parameter
$\lambda$ (see Section~\ref{subsec:continuous}). It is easily seen that the probability of local and 
global survival are nondecreasing functions with respect to $\lambda$; thus local and global
survival are monotone properties (meaning that if one of them holds for some $\lambda_0$
then it holds for all $\lambda \ge \lambda_0$) and it is possible to define the local
and global critical parameters $\lambda_s$ and $\lambda_w$ (see Section~\ref{subsec:continuous}). 
We show that monotonicity in $\lambda$ does not hold for strong local
survival and it is thus impossible in general to define a strong local critical parameter: 
for the irreducible BRW in Example~\ref{rem:nonstrongandstrong}
if $\lambda$ is small enough or large enough there is
strong local survival but in a intermediate interval for $\lambda$ there is global and local
survival with different probabilities. 

Here is the outline of the paper.
 In Section~\ref{sec:basic} we give the necessary definitions and some basic facts about
discrete-time BRWs (Section~\ref{subsec:discrete}), 
continuous-time BRWs (Section~\ref{subsec:continuous}), the infinite-dimensional
generating function $G$, defined on $[0,1]^X$ associated to a BRW (Section~\ref{subsec:genfun}) and the
special class of $\mathcal F$-BRWs (Section~\ref{subsec:FBRWs}). This class contains
properly the class of BRWs on quasi-transitive graphs (which were studied in \cite{cf:Stacey03}).
We also exhibit the explicit expression of $G$ in a particular case of BRW with independent
diffusion (equation~\eqref{eq:Gcontinuous}) and of BRW with no death constructed from a BRW with death
disregarding all particles with finite progenies (equation~\eqref{eq:G-one}).
Moreover in Section~\ref{subsec:genfun} we state a maximum principle
for the solutions of the equation $G(v) \ge v$, including all fixed points of $G$
 (Proposition~\ref{pro:maximumprinciple}).

Section~\ref{sec:survival} is devoted to the study of all the types of survival.
We first recall, in Section~\ref{subsec:local}, results on local and global survival
(Theorems~\ref{th:discretesurv} and \ref{th:continuoussurv}).
In Section~\ref{subsec:survivalprob} extinction probabilities
are seen as fixed points of the generating function $G$. 
Theorem~\ref{rem:strongconditioned} gives equivalent conditions for strong
local survival, in terms of extinction probabilities, which are useful to 
prove that strong local survival is not
monotone. 

From the maximum principle 
we derive
Theorem~\ref{th:Fgraph}, which describes some properties of fixed points of $G$ for $\mathcal F$-BRWs,
and Corollaries~\ref{cor:finiteBRW} and \ref{pro:qtransitive}.
Corollary~\ref{pro:qtransitive} shows that for an irreducible, 
quasi-transitive BRW, there are only three possible behaviours
(independently of the starting vertex):
global extinction, pure global survival or strong local survival.
Thus, in this case 
strong local survival is monotone
and the critical parameter is $\lambda_s$.
A characterization of strong local survival in terms of the existence of a solution of some inequalities
involving the generating function $G$ is given by Theorem~\ref{th:MenshikovVolkov}.

Section~\ref{sec:examples} is devoted to examples and counterexamples.
For a continuous-time irreducible $\mathcal F$-BRW,  the existence of a pure global phase
is equivalent to nonamenability (see Section~\ref{subsec:discrete} and 
Section~\ref{subsec:stronglocal}). 
Nevertheless in general nonamenability neither implies nor is implied by the existence of
a pure global phase (Example~\ref{exm:amenable}). 
Finally we show (Examples~\ref{ex:nonstronglocalFBRW} and
\ref{ex:nonstronglocalFBRW2}) that even fairly simple BRWs (such as irreducible 
BRWs with independent diffusion
and with offspring distribution independent of the site) may have non-strong local survival. 
This implies that, even in the irreducible case, the generating function $G$ may have more than two fixed points
in $[0,1]^X$ and disproves a result in \cite{cf:Spataru89} (see Remark~\ref{rem:Spataru}).

\section{Basic definitions and preliminaries}
\label{sec:basic}

\subsection{Discrete-time Branching Random Walks}
\label{subsec:discrete}

We start with the construction of a generic discrete-time BRW $\{\eta_n\}_{n \in \N}$
(see also \cite{cf:BZ2} where it is
called \textit{infinite-type branching process}) on a set $X$ which is
at most countable; $\eta_n(x)$ represents the number of particles alive
at $x \in X$ at time $n$. To this aim we
consider a family $\mu=\{\mu_x\}_{x \in X}$
of probability measures 
on the (countable) measurable space $(S_X,2^{S_X})$
where $S_X:=\{f:X \to \N\colon \sum_yf(y)<\infty\}$.
To obtain generation $n+1$ from generation $n$ we proceed as follows:
a particle at site $x\in X$ lives one unit of time,
then a function $f \in S_X$ is chosen at random according to the law $\mu_x$
and the original particle is replaced by $f(y)$ particles at
$y$, for all $y \in X$; this is done independently for all particles of
generation $n$ (a similar construction in random environment can be found
in \cite{cf:GMPV09}).
Note that the choice of $f$ assigns simultaneously the total number of children
and the location where they will live.
We denote the BRW by the couple $(X,\mu)$.

Equivalently we could introduce the BRW by choosing first the number of children and afterwards
their location.
Indeed define $\cH:S_X \rightarrow \N$ as $\cH(f):=\sum_{y \in X} f(y)$
which represents the total number of children associated to $f$.
Denote by $\rho_x$ the measure on $\N$ defined by
$\rho_x(\cdot):=\mu_x(\cH^{-1}(\cdot))$; this is the law of the random number of children
of a particle living at $x$. For each particle, independently, we pick a number $n$ at random,
according to the law $\rho_x$, then we choose a function $f \in \cH^{-1}(n)$ with probability
$\mu_x(f)/\rho_x(n)\equiv\mu_x(f)/\sum_{g \in \cH^{-1}(n)}\mu_x(g)$ and
we replace the particle at $x$ with $f(y)$ particles at $y$ (for all $y \in X$).



In BRW theory  a fundamental role is played by the \textit{first-moment matrix}
$M=(m_{xy})_{x,y \in X}$,
where
$m_{xy}:=\sum_{f\in S_X} f(y)\mu_x(f)$ is
 the expected number of particles from $x$ to $y$
(that is, the expected number of children that a particle living
at $x$ sends to $y$).
 We
suppose that $\sup_{x \in X} \sum_{y \in X} m_{xy}<+\infty$; most of the results
of this paper still hold without this hypothesis, nevertheless
it allows us to avoid dealing with an infinite expected number of offsprings.
The expected number of children generated by a particle living at $x$
is $\sum_{y \in X} m_{xy} = \sum_{n \ge 0} n \rho_x(n)=:\bar \rho_x$.
Given a function $f$ defined on $X$ we denote by $Mf$ the function $Mf(x):=\sum_{y \in X} m_{xy}f(y)$
whenever the right-hand side converges absolutely for all $x$.
We denote by
 $m^{(n)}_{xy}$
the entries of the $n$th power matrix $M^n$ and we define
\begin{equation}\label{eq:generalgeomparam}
M_s(x,y) := \limsup_{n \to \infty} \sqrt[n]{m_{xy}^{(n)}}, \quad
M_w(x) := \liminf_{n \to \infty} \sqrt[n]{\sum_{y \in X} m_{xy}^{(n)}}, \qquad \forall x,y \in X.
\end{equation}

Explicit computations of  $M_s(x,y)$ and $M_w(x)$ are possible in some cases (see \cite{cf:BZ, cf:BZ2}):
in particular $M_s(x,x)$ can be obtained by means of a generating function (see  \cite[Section 3.2]{cf:Z1}).
In this paper we do not need to compute explicitly $M_s$ and $M_w$ except for some specific examples
where justifications will be provided. 

For a generic BRW, we call \textit{diffusion matrix} the matrix $P$ with entries $p(x,y)=m_{xy}/\bar \rho_x$.
In particular if $\bar \rho_x$ does not depend on $x \in X$, we have that $M_w(x)=\bar \rho$ for all $x \in X$ and
$M_s(x,y)=\bar \rho \cdot \limsup_{n \to \infty} \sqrt[n]{p^{(n)}(x,y)}$ (where the $\limsup$ defines
the \textit{spectral radius} of $P$ according to \cite[Chapter I, Section 1.B]{cf:Woess}). 

Note that, in the general case, the locations of the offsprings are not chosen independently
(they are assigned by the chosen $f\in S_X$).
When
the offsprings are dispersed independently according to $P$ we call the process a
\textit{BRWs with independent diffusion}:
in this case
\begin{equation}\label{eq:particular1}
\mu_x(f)=\rho_x \left (\sum_y f(y) \right )\frac{(\sum_y f(y))!}{\prod_y f(y)!} \prod_y p(x,y)^{f(y)},
\quad \forall f \in S_X.
\end{equation}

To  a generic discrete-time BRW we associate a graph $(X,E_\mu)$ where $(x,y) \in E_\mu$  
if and only if $m_{xy}>0$.
We denote by $\mathrm{deg}(x)$
the degree of a vertex $x$, that
is, the cardinality of the set $ \mathcal{N}_x:=\{y\in X\colon  (x,y) \in E_\mu
\}$.
We say that there is a path from $x$ to $y$, and we write $x \to y$, if it is
possible to find a finite sequence $\{x_i\}_{i=0}^n$ (where $n \in \N$)
such that $x_0=x$, $x_n=y$ and $(x_i,x_{i+1}) \in E_\mu$
for all $i=0, \ldots, n-1$. If $x \to y$ and $y \to x$ we write $x \rightleftharpoons y$.
Observe that there is always a path of length $0$ from $x$ to itself. The equivalence 
class $[x]$ of $x$ with respect to $\rightleftharpoons$ is called \textit{irreducible class of $x$}.
It is easy to show that if $x \rightleftharpoons x^\prime$ and $y \rightleftharpoons y^\prime$ then
$M_s(x,y)=M_s(x^\prime,y^\prime)$ and $M_w(x)=M_w(x^\prime)$. Moreover, 
$m^{(n)}_{xx}$ and $M_s(x,x)$ depend only on the entries $(m_{ww^\prime})_{w,w^\prime \in [x]}$. 
We call the matrix $M=(m_{xy})_{x,y \in X}$ \textit{irreducible} if and only if
the graph $(X,E_\mu)$ is \textit{connected} (that is, there is only one irreducible class), 
otherwise we call it \textit{reducible}.
From the BRW point of view, the irreducibility of $M$ means that the progeny of any particle can spread to any site of the 
graph. For an irreducible BRW, $M_s(x,y)=M_s$ and $M_w(x)=M_w$ for all $x,y \in X$.

The BRW $(X,\mu)$ is called \textit{non-oriented} or \textit{symmetric} if $m_{xy}=m_{yx}$ for every $x,y \in X$.
Note that if $(X,\mu)$ is non-oriented then the graph $(X,E_\mu)$ is non-oriented (that is, $(x,y) \in E_\mu$
if and only if $(y,x) \in E_\mu$).
$(X,\mu)$ is called \textit{nonamenable} if and only if
\[ 
\inf
\left \{
\frac{\sum_{x \in S, y \in S^\complement} m_{xy}}{|S|}\colon  S \subseteq X, |S| < \infty
\right \}
>0,
\] 
and it is called \textit{amenable} otherwise.

The idea behind the definition of nonamenability is that the expected number of children placed outside every finite subset of 
$X$ is always comparable with the size of the subset itself. This suggests, in principle, that it should be possible 
for the BRW to survive and, at the same time, to escape from every finite set.
This is true for a subclass of BRWs but not in general, see Example~\ref{exm:amenable} and the preceding discussion.
We note that, if $m_{xy}\in\{0, \lambda\}$ (for some fixed $\lambda>0$) then the BRW is nonamenable if and only if the graph $(X, E_\mu)$ is  
nonamenable according to the usual definition for graphs (see \cite[Chapter II, Section 12.B]{cf:Woess}).
%
%
%


Depending on the initial configuration, the process can survive in different ways.
We consider initial configurations with only one particle placed at a fixed site $x$:
let $\pr^{\delta_{x}}$ be the law of this process. Throughout this paper \textit{wpp}
is shorthand for ``with positive probability''.

\begin{defn}\label{def:survival} $\ $
\begin{enumerate}
 \item 
The
process \textsl{survives locally wpp} in $A \subseteq X$ starting from $x \in X$
if
$
{\mathbf{q}}(x,A):=1-\pr^{\delta_x}(\limsup_{n \to \infty} \sum_{y \in A} \eta_n(y)>0)<1.
$
\item
The process \textsl{survives globally wpp} starting from $x$ if
$
\bar {\mathbf{q}}(x):={\mathbf{q}}(x,X) 
<1.
$
\item
There is \textsl{strong local survival wpp} in $A \subseteq X$ starting from $x \in X$
if
$ 
{\mathbf{q}}(x,A)=\bar {\mathbf{q}}(x)<1
$ 
and \textsl{non-strong local survival wpp} in $A$ if $\bar {\mathbf{q}}(x)<{\mathbf{q}}(x,A)<1$.
\item
The BRW is in a \textit{pure global survival phase} starting from $x$ if
$ 
\bar {\mathbf{q}}(x)<{\mathbf{q}}(x,x)=1
$ 
(where we write ${\mathbf{q}}(x,y)$ instead of ${\mathbf{q}}(x, \{y\})$ for all $x,y \in X$).
\end{enumerate}
\end{defn}
\noindent
From now on, when we talk about survival, ``wpp'' will be tacitly understood.
Often we will say simply that local survival occurs ``starting from $x$'' or ``at $x$'':
in this case we mean that $x=y$. When there is no survival wpp, we say that there is extinction
and the fact that extinction occurs with probability one will be tacitly understood.

Note that ${\mathbf{q}}(x,A)$ are the probabilities of extinction in $A$ starting from $x$.
Roughly speaking, 
there is strong survival at $y$ starting from $x$ if and only if the probability
of local survival at $y$ starting from $x$ conditioned on global survival starting from $x$ is $1$.
Thus, strong local survival means that for almost all realizations the process either survives locally
(hence globally) or it goes globally extinct. 
There are many relations between $\bar {\mathbf{q}}(x)$ and ${\mathbf{q}}(x,y)$ and between ${\mathbf{q}}(w,x)$ and
${\mathbf{q}}(w,y)$ where $x,y, w \in X$ (see for instance Section~\ref{subsec:survivalprob} or \cite{cf:BZ4, cf:Z1}).

In order to avoid trivial situations where particles have one offspring almost surely, we assume
henceforth the following.
\begin{assump}\label{assump:1}
For all $x \in X$ there is a vertex $y \rightleftharpoons x$ such that
$\mu_y(f\colon  \sum_{w\colon w \rightleftharpoons y} f(w)=1)<1$,
 that is, in every equivalence class (with respect to $\rightleftharpoons$)
there is at least one vertex where a particle
can have inside the class a number of children different from one wpp.
\end{assump}

\subsection{Continuous-time Branching Random Walks}
\label{subsec:continuous}


In continuous time each particle has an exponentially distributed
random lifetime with parameter 1. The breeding mechanisms can
be regulated by means of a nonnegative matrix $K=(k_{xy})_{x,y \in X}$ in such a way that
for each particle alive at $x$,
there is a clock with $Exp(\lambda k_{xy})$-distributed intervals (where $\lambda>0$),
each time the clock
rings the particle places one son at $y$. We say that the BRW has a death rate 1 and a
reproduction rate $\lambda k_{xy}$ from $x$ to $y$.
We observe (see Remark~\ref{rem:deathrate}) that the assumption
of a nonconstant death rate does not represent a
significant generalization.
We denote by  $(X,K)$ a family of continuous-time BRWs (depending on the parameter $\lambda>0$),
while we use the notation $(X,\mu)$ for
a discrete-time BRW.


To a continuous-time BRW
one can associate a discrete-time counterpart which takes into account all the offsprings 
of a particle before it dies;
in this sense the theory of continuous-time BRWs, as long as 
as it concerns the probabilities of survival (local, strong local and global),
is a particular case of the theory of discrete-time BRWs.
Elementary calculations show that $\mu_x$ satisfies 
equation~\eqref{eq:particular1}, where
\begin{equation}\label{eq:counterpart}
\rho_x(i)=\frac{1}{1+\lambda k(x)} \left ( \frac{\lambda k(x)}{1+\lambda k(x)} \right )^i, \qquad
p(x,y)=\frac{k_{xy}}{k(x)},
\end{equation}
($k(x):=\sum_{y \in X} k_{xy}$).
Note that the discrete-time
counterpart of a continuous-time BRW is a BRW with independent diffusion and that
$\rho_x$ depends only on $\lambda k(x)$.
It is straightforward to show that $m_{xy}=\lambda k_{xy}$ and $\bar \rho_x=\lambda k(x)$.
Moreover equation~\eqref{eq:counterpart} shows that
the discrete-time
counterpart satisfies Assumption~\ref{assump:1}.
%
All the definitions given in the discrete-time case extend to continuous-time BRWs:
a continuous-time BRW has some property if and only if its discrete-time
counterpart has it.

\begin{rem}\label{rem:deathrate}
The same construction applies to continuous-time BRWs with a death rate $d(x)>0$ dependent on $x \in X$. In this
case the discrete-time counterpart satisfies equation~\eqref{eq:particular1} where
\[ 
\rho_x(i)=\frac{d(x)}{d(x)+\lambda k(x)} \left ( \frac{\lambda k(x)}{d(x)+\lambda k(x)} \right )^i, \qquad
p(x,y)=\frac{k_{xy}}{k(x)}.
\] 
Hence, from the point of view of local and global survival, this process is equivalent to a continuous-time BRW with death rate
$1$ and reproduction rate $\lambda k_{xy}/d(x)$ from $x$ to $y$.
\end{rem}

%
%

Given $x \in X$, two critical parameters are associated to the
continuous-time BRW: the \textit{global} 
\textit{survival critical parameter} $\lambda_w(x)$ and the  \textit{local} 
 \textit{survival critical parameter} $\lambda_s(x)$.
They are defined as
\[ 
\begin{split}
\lambda_w(x)&:=\inf \Big \{\lambda>0\colon \,
\pr^{\delta_{x}}\Big (\sum_{w \in X} \eta_t(w)>0, \forall t\Big) >0 \Big \},\\
\lambda_s(x)&:=
\inf\{\lambda>0\colon \,
\pr^{\delta_{x}} \big(\limsup_{t \to \infty} \eta_t(x)>0 \big) >0
\}.
  \end{split}
\] 
These values are constant in every irreducible class; in particular they do not depend
on $x$ if the BRW is irreducible.
The process is called
\textit{globally supercritical}, \textit{critical} or \textit{subcritical}
if $\lambda>\lambda_w$, $\lambda=\lambda_w$ or $\lambda<\lambda_w$;
an analogous definition is given for the local behavior using $\lambda_s$ instead of $\lambda_w$.
In particular we say that there exists a \textit{pure global survival phase} starting from $x$ if the
interval $(\lambda_w(x),\lambda_s(x))$ is not empty; clearly, if $\lambda  \in (\lambda_w(x),\lambda_s(x))$
then the BRW is in a pure global survival phase according to Definition~\ref{def:survival}.

Given a continuous-time BRW $(X,K)$ we define 
\[ 
K_s(x,y) := \frac{M_s(x,y)}\lambda\equiv\limsup_{n \to \infty} \sqrt[n]{k_{xy}^{(n)}}, \quad
K_w(x) := \frac{M_w(x)}\lambda\equiv\liminf_{n \to \infty} \sqrt[n]{\sum_{y \in X} k_{xy}^{(n)}}, \qquad \forall x,y \in X,
\] 
where $M_s(x,y)$ and $M_w(x)$ are the corresponding parameters of the discrete-time counterpart.
$K_s(x,y)$ and $K_w(x)$ depend only on the equivalence classes of $x$ and $y$, hence
if the BRW is irreducible, then they do not depend on $x,y \in X$.
%
%
%

We say that a BRW is \textit{site-breeding} if $k(x)$ does not depend on $x \in X$.
We say that a BRW is \textit{edge-breeding} if $k_{xy} \in \N$. The typical edge-breeding BRW can be
constructed from a multigraph with set of vertices $X$ by defining $k_{xy}$ as
the number of edges from $x$ to $y$; in this case to each edge there corresponds a constant
reproduction rate $\lambda$.
If the multigraph is a graph, then it coincides with the graph $(X,E_\mu)$ associated 
with the discrete-time counterpart of the edge-breeding BRW.

\subsection{Infinite-dimensional generating function}\label{subsec:genfun}

We associate a generating function $G:[0,1]^X \to [0,1]^X$
to the family $\{\mu_x\}_{x \in X}$
which can be considered as an infinite dimensional power series. 
More precisely,
for all ${\mathbf{z}} \in [0,1]^X$, $G({\mathbf{z}}) \in [0,1]^X$ is defined as the following weighted sum of (finite) products
\[ 
G({\mathbf{z}}|x):= \sum_{f \in S_X} \mu_x(f) \prod_{y \in X} {\mathbf{z}}(y)^{f(y)},
\] 
where $G({\mathbf{z}}|x)$ is the $x$ coordinate of $G({\mathbf{z}})$.
The family $\{\mu_x\}_{x \in X}$ is uniquely determined by $G$. Indeed fix a finite $X_0 \subseteq X$ and $x \in X$.
For every $\mathbf{z}$ with support in $X_0$, we have  
$G({\mathbf{z}}|x)= \sum_{f \in S_{X_0}} \mu_x(f) \prod_{y \in X_0} {\mathbf{z}}(y)^{f(y)}$
which can be identified with a power series with several variables (defined on $[0,1]^{X_0}$).
Suppose now we have another generating function $ \overline G$ (associated to  $\{\overline \mu_x\}_{x \in X}$)
such that $G=\overline G$. In particular $G({\mathbf{z}}|x)=\overline G({\mathbf{z}}|x)$ for every 
$\mathbf{z}$ with support in $X_0$. Thus $\mu_x(f)=\overline \mu_x(f)$ for all $f \in S_{X_0}$.
Since $S_X =\bigcup_{\{X_0 \subseteq X \colon X_0 \textrm{ finite}\}}S_{X_0}$ we have that
$\mu_x(f)=\overline \mu_x(f)$ for
all $f \in S_X$.

Note that $G$ is continuous with respect to the \textit{pointwise convergence topology} of $[0,1]^X$  and nondecreasing
with respect to the usual partial order of $[0,1]^X$ (see \cite[Sections 2 and 3]{cf:BZ2} for further details).
Moreover, $G$ represents the 1-step reproductions; we denote by $G^{(n)}$ the generating function
associated to the $n$-step reproductions, which is inductively defined as $G^{(n+1)}({\mathbf{z}})=G^{(n)}(G({\mathbf{z}}))$,
where $G^{(0)}$ is the identity.
Extinction probabilities are fixed points
of $G$ and the smallest one is $\bar {\mathbf{q}}$ (see Section~\ref{subsec:survivalprob} for details). 

%
%
An example where the function $G$ can be explicitly computed is a BRW 
with independent diffusion: in this case it is not difficult to see that
$G({\mathbf{z}}|x)=\sum_{n \in \N} \rho_x(n) (P{\mathbf{z}}(x))^n$
where $P{\mathbf{z}}(x)=\sum_{y \in X} p(x,y){\mathbf{z}}(y)$.
If, in particular, $\rho_x(n)=\frac{1}{1+\bar \rho_x} (\frac{\bar \rho_x}{1+\bar \rho_x} )^n$ 
(as in the discrete-time counterpart of a continuous-time BRW) then
the previous expression becomes $G({\mathbf{z}}|x)=(1+\bar \rho_x P(\mathbf{1}-\mathbf{z})(x))^{-1}$.
The previous equality can be written in a more compact way as
\begin{equation}\label{eq:Gcontinuous}
 G({\mathbf{z}})= \frac{\mathbf{1}}{\mathbf{1}+M(\mathbf{1}-{\mathbf{z}})}
\end{equation}
where $M$ is the first-moment matrix and $M \mathbf{v}(x)=\bar \rho_x P\mathbf{v}(x)$ 
(by definition of $P$).
In equation~\eqref{eq:Gcontinuous} and hereafter, whenever ${\mathbf{z}}, {\mathbf{v}} \in [0,1]^X$ the ratio ${\mathbf{z}}/{\mathbf{v}}$ will be taken coordinatewise,
that is, $({\mathbf{z}}/{\mathbf{v}})(x):={\mathbf{z}}(x)/{\mathbf{v}}(x)$ for all $x$ such that ${\mathbf{v}}(x)>0$ 
(the value of $({\mathbf{z}}/{\mathbf{v}})(x)$
if ${\mathbf{v}}(x)=0$, if any, will be explicitly  defined when needed). 

When one is interested in the question whether a global surviving BRW survives strong locally,
it may be useful to condition the process on global survival. 
Given a generic discrete-time BRW such that $\bar {\mathbf{q}}(x)<1$ for all $x \in X$, 
by conditioning on global survival, we associate a BRW with no death
(that is, a BRW such that $\rho_x(0)=0$).
Let $\{\eta_n\}_{n \in \N}$ be the original BRW. Consider the event $\Omega_\infty=
\{\sum_{x \in X} \eta_n(x)>0, \, \forall n \in \N\}$ and define
the process $\{\widehat \eta_n\}_{n \in \N}$ as follows:
$\widehat \eta_n(x, \omega)$ equals the number of particles in $\eta_n(x, \omega)$ with at least one infinite line of descent
when $\omega \in \Omega_\infty$ and it equals  $0$ when $\omega \not \in \Omega_\infty$.
Roughly speaking, $\{\widehat \eta_n\}_{n \in \N}$ is obtained
by $\{\eta_n\}_{n \in \N}$ by removing all the particles with finite progeny, which
are clearly irrelevant in view of the survival due to the fact that $\bar {\mathbf{q}}(x)<1$ for all $x \in X$.
%
%
Hence, we have that the probability of local survival of
$\{\widehat \eta_n\}_{n \in \N}$ in $A$ (for all $A \subseteq X$),
starting from $x$ is equal to the same probability for $\{\eta_n\}_{n \in \N}$,
that is, ${\mathbf{q}}(x,A)$.
It can be shown that this process, restricted to $\Omega_\infty$ is a BRW 
that we call the \textit{no-death BRW associated to $\{\eta_n\}_{n \in \N}$}
(we still denote it by
$\{\widehat \eta_n\}_{n \in \N}$). Its
generating function is
\begin{equation}\label{eq:G-one}
\widehat G({\mathbf{z}}|x) = \frac{G(v({\mathbf{z}})|x)-\bar {\mathbf{q}}(x)}{1-\bar {\mathbf{q}}(x)},
\end{equation}
where 
$G$ is the generating function of the original BRW and $v:[0,1]^X \rightarrow [0,1]^X$ is
defined as $v({\mathbf{z}}|x):=\bar {\mathbf{q}}(x)+{\mathbf{z}}(x)(1-\bar {\mathbf{q}}(x))$.
In a more compact way equation~\eqref{eq:G-one} can be written as $\widehat G=T^{-1}_{\bar {\mathbf{q}}} \circ G \circ T_{\bar {\mathbf{q}}}$ where
$T_{\mathbf{w}}:[0,1]^X \to \{{\mathbf{z}} \in [0,1]^X\colon  {\mathbf{w}} \le {\mathbf{z}}\}$ is defined as $T_{\mathbf{w}}{\mathbf{z}}(x):={\mathbf{z}}(x)(1-{\mathbf{w}}(x))+{\mathbf{w}}(x)$; note that
$T_{\mathbf{w}}$ is nondecreasing and, if ${\mathbf{w}}(x)  <{1}$ for all $x \in X$, bijective.
In particular if $\bar {\mathbf{q}} < \mathbf{1}$ then $T_{\bar {\mathbf{q}}}$ is a bijective map from the set
of fixed points of $\widehat G$ to the set of fixed points of $G$.

Clearly, for all $A \subseteq X$, the probability of local survival in $A$ of the associated no-death BRW
 starting from $x$ is 
the probability of local survival in $A$ of the original BRW  conditioned on 
global survival (starting from $x$), that is, 
$1-(T_{\bar {\mathbf{q}}}^{-1} {\mathbf{q}}(\cdot,A))(x)= (1-{\mathbf{q}}(x,A))/(1-\bar {\mathbf{q}}(x))$.

The following proposition is a sort of \textit{maximum principle} for
the function $({\mathbf{z}}-\bar {\mathbf{q}})/(\mathbf{1}-\bar {\mathbf{q}})$ where ${\mathbf{z}}$ is such that $G({\mathbf{z}}) \ge {\mathbf{z}}$
(note that we are not assuming that $\bar {\mathbf{q}}(x)<1$ for all $x \in X$).

\begin{pro}\label{pro:maximumprinciple} 
Given ${\mathbf{z}} \in [0,1]^X$ such that ${\mathbf{z}} \ge \bar {\mathbf{q}}$ is 
a solution of the inequality $G({\mathbf{z}}) \ge {\mathbf{z}}$,
we define
$\widehat {\mathbf{z}}:=({\mathbf{z}}-\bar {\mathbf{q}})/(\mathbf{1}-\bar {\mathbf{q}})$ where
 $\widehat {\mathbf{z}}(x):=1$ for all
$x$ such that $\bar {\mathbf{q}}(x)=1$. 
Then for all $x \in X$ such that the set $\mathcal{N}_x=\{y\colon (x,y) \in E_\mu\}$
is not empty,
either $\widehat {\mathbf{z}}(y) = \widehat {\mathbf{z}}(x)$ for all $y \in \mathcal{N}_x$ or there exists
$y \in  \mathcal{N}_x$ such that $\widehat {\mathbf{z}}(y)> \widehat {\mathbf{z}}(x)$. In particular
if $\widehat {\mathbf{z}}(x)=1$ then for all $y \in \mathcal{N}_x$ we have $\widehat {\mathbf{z}}(y)=1$.
The same results hold if we take the set $\{y \in X \colon  x \to y\}$ instead of $\mathcal{N}_x$.
\end{pro}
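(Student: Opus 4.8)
The plan is to argue by contradiction, ruling out the only forbidden configuration allowed by the dichotomy: that for some $x$ with $\mathcal{N}_x \neq \emptyset$ one has $\widehat{\mathbf{z}}(y) \le \widehat{\mathbf{z}}(x)$ for every $y \in \mathcal{N}_x$, with strict inequality for at least one $y_0 \in \mathcal{N}_x$. Set $c := \widehat{\mathbf{z}}(x)$ and introduce the comparison vector $\mathbf{w}$ that is constant in the normalized coordinate, namely $\mathbf{w}(y) := \bar{\mathbf{q}}(y) + c(1 - \bar{\mathbf{q}}(y))$ for all $y$ (so that $\widehat{\mathbf{w}} \equiv c$). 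The forbidden configuration forces $c>0$ (if $c=0$ every neighbour would satisfy $\widehat{\mathbf{z}}(y)\le 0$ and hence equal $0$), and it gives $\mathbf{z}(y) \le \mathbf{w}(y)$ for all $y \in \mathcal{N}_x$, $\mathbf{z}(x) = \mathbf{w}(x)$, and $\mathbf{z}(y_0) < \mathbf{w}(y_0)$ (note $\bar{\mathbf{q}}(y_0)<1$ automatically, since $\widehat{\mathbf{z}}(y_0)<c\le 1$). I would first record that $G(\cdot|x)$ depends only on the coordinates in $\mathcal{N}_x$: if $\mu_x(f)>0$ and $f(y)>0$ then $m_{xy} \ge f(y)\mu_x(f) > 0$, so every $f$ contributing to the sum defining $G(\mathbf{z}|x)$ is supported in $\mathcal{N}_x$.

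The crux is the bound $G(\mathbf{w}|x) \le \mathbf{w}(x)$, which I would extract from convexity of $G$ along the ray from $\bar{\mathbf{q}}$ to $\mathbf{1}$. Consider $\phi(t) := G\big(\bar{\mathbf{q}} + tc(\mathbf{1} - \bar{\mathbf{q}})\,|\,x\big)$ for $t \in [0,1/c]$. Each monomial $\prod_y(\cdot)^{f(y)}$ becomes a product of affine, nondecreasing, nonnegative functions of $t$, hence convex, and a nonnegative $\mu_x$-combination of convex functions is convex, so $\phi$ is convex. Moreover $\phi(0)=G(\bar{\mathbf{q}}|x)=\bar{\mathbf{q}}(x)$ since $\bar{\mathbf{q}}$ is a fixed point, and $\phi(1/c)=G(\mathbf{1}|x)=1$ since $\mu_x$ is a probability measure. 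The chord inequality at $t=1$ then yields $G(\mathbf{w}|x)=\phi(1) \le (1-c)\bar{\mathbf{q}}(x)+c = \mathbf{w}(x)$.

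Combining the pieces produces a chain of equalities. Since $\mathbf{z}\le\mathbf{w}$ on $\mathcal{N}_x$, $G(\cdot|x)$ is nondecreasing and neighbour-dependent, and $G(\mathbf{z}|x)\ge\mathbf{z}(x)$ by hypothesis, we get $\mathbf{w}(x)=\mathbf{z}(x)\le G(\mathbf{z}|x)\le G(\mathbf{w}|x)\le \mathbf{w}(x)$, forcing $G(\mathbf{z}|x)=G(\mathbf{w}|x)$. But because $y_0\in\mathcal{N}_x$ there is $f$ with $\mu_x(f)>0$ and $f(y_0)\ge 1$; as $c>0$ every $\mathbf{w}(y)$ is strictly positive, so the corresponding term satisfies $\prod_y \mathbf{z}(y)^{f(y)} < \prod_y \mathbf{w}(y)^{f(y)}$ strictly while all other terms are $\le$, giving $G(\mathbf{z}|x)<G(\mathbf{w}|x)$, a contradiction. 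This proves the dichotomy for $\mathcal{N}_x$, and the special case $\widehat{\mathbf{z}}(x)=1$ follows at once, since $\widehat{\mathbf{z}}\le\mathbf{1}$ rules out a strictly larger neighbour. The convention $\widehat{\mathbf{z}}(x):=1$ where $\bar{\mathbf{q}}(x)=1$ is absorbed without incident, as such coordinates simply have $\mathbf{z}(y)=\mathbf{w}(y)=1$.

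Finally, for the version with $\{y : x \to y\}$ in place of $\mathcal{N}_x$, I would propagate the neighbour statement along paths. Assuming the forbidden configuration on $R=\{y:x\to y\}$, put $c=\widehat{\mathbf{z}}(x)=\max_{y\in R}\widehat{\mathbf{z}}(y)$ and $R_{\max}=\{y\in R:\widehat{\mathbf{z}}(y)=c\}$. Applying the neighbour dichotomy at any $w\in R_{\max}$ with $\mathcal{N}_w\neq\emptyset$ (no neighbour can exceed the global maximum $c$, as $\mathcal{N}_w\subseteq R$) forces $\mathcal{N}_w\subseteq R_{\max}$, so $R_{\max}$ is closed under taking out-neighbours; following any path $x=x_0\to x_1\to\cdots\to x_n=y_0$ then gives $y_0\in R_{\max}$ by induction, contradicting $\widehat{\mathbf{z}}(y_0)<c$. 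The main obstacle is the second paragraph: choosing the comparison vector constant in the normalized coordinate and deriving the chord bound $G(\mathbf{w}|x)\le\mathbf{w}(x)$ from convexity of $G$ along the segment from $\bar{\mathbf{q}}$ to $\mathbf{1}$; once this is secured, monotonicity together with a single strict product inequality closes the argument.
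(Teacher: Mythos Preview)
Your proof is correct, and the route is genuinely different from the paper's. The paper first strips off the vertices with $\bar{\mathbf q}(x)=1$ and then conjugates by $T_{\bar{\mathbf q}}$ to pass to the associated no-death BRW (equation~\eqref{eq:G-one}); there $\bar{\mathbf q}=\mathbf 0$, $\widehat{\mathbf z}=\mathbf z$, and the forbidden configuration is killed by the one-line product bound $\prod_y \mathbf z(y)^{f(y)}\le \mathbf z(x)$ valid whenever $\mathcal H(f)\ge 1$ and every factor is $\le \mathbf z(x)\le 1$, with strict inequality on the terms having $f(y_0)>0$. You instead stay in the original BRW and introduce the comparison vector $\mathbf w=T_{\bar{\mathbf q}}(c\mathbf 1)$; the inequality $G(\mathbf w|x)\le \mathbf w(x)$ that you need is obtained from convexity of $t\mapsto G(\bar{\mathbf q}+t(\mathbf 1-\bar{\mathbf q})|x)$ between the two known fixed points $\bar{\mathbf q}$ and $\mathbf 1$, after which monotonicity and one strict monomial inequality close the contradiction. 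The paper's approach is shorter at the core but leans on the no-death machinery set up in Section~\ref{subsec:genfun}; yours is self-contained and makes explicit that the obstruction is precisely the convexity of the generating function along the segment $[\bar{\mathbf q},\mathbf 1]$. A minor remark: your treatment of coordinates with $\bar{\mathbf q}(y)=1$ is fine but could be stated more sharply by noting, as the paper does, that $\bar{\mathbf q}(x)=1$ forces $\bar{\mathbf q}(y)=1$ for every $y\in\mathcal N_x$, so such $x$ never carry the forbidden configuration.
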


As an application, in a finite, final irreducible class
(for instance if the BRW is irreducible and the set $X$ is finite) if $\mathbf{z}$ is as in Proposition~\ref{pro:maximumprinciple}, then
$\widehat {\mathbf{z}}$ is a
constant vector.

\subsection{$\mathcal F$-BRWs}\label{subsec:FBRWs}

Some results can be achieved if the BRW has some regularity; to this aim
we introduce the concept of $\mathcal F$-BRW (see also \cite[Definition 4.2]{cf:Z1}),
which extends the concept of quasi-transitivity (see below).

\begin{defn}
\label{def:locallyisomorphic} 
$\ $
\begin{enumerate}
 \item 
A BRW $(X, \mu)$ is locally isomorphic to a BRW $(Y,\nu)$ if there exists a
surjective map $g:X\to Y$ such that
$
\nu_{g(x)}(\cdot)=
\mu_x\left(\pi_g^{-1}(\cdot)\right)
$, 
where $\pi_g:S_X \rightarrow S_Y$ is defined as $\pi_g(f)(y)=\sum_{z\in g^{-1}(y)}f(z)$ for all $f\in S_X$, $y \in Y$.
\item
$(X, \mu)$ is a $\mathcal F$-BRW if it is locally isomorphic to some
BRW $(Y,\nu)$ on a finite set $Y$.
\end{enumerate}
\end{defn}
Clearly, if $(X,\mu)$ is locally isomorphic to $(Y, \nu)$ then
\begin{equation} \label{eq:Gfunctions}
G_X({\mathbf{z}} \circ g|x)=G_Y({\mathbf{z}}|g(x))
\end{equation}
for all ${\mathbf{z}} \in [0,1]^Y$ and $x \in X$. 
We note that, since $\mu$ is uniquely determined by $G$,
 equation~\eqref{eq:Gfunctions} holds  if and only if $(X,\mu)$ is locally
isomorphic to $(Y,\nu)$ and $g$ is the map in Definition~\ref{def:locallyisomorphic}.
If $\{\eta_n\}_{n \in \N}$ is a realization of the BRW $(X, \mu)$ then $\{\pi_g(\eta_n)\}_{n \in \N}$ is a realization
of the BRW $(Y, \nu)$.

Using equation~\eqref{eq:Gfunctions} and the fact that $\bar {\mathbf{q}}= \lim_{n \to \infty} G^{(n)}(\mathbf 0)$
(see equation~\eqref{eq:extprobab} with $A=X$), it is possible to prove that
there is global survival for $(X,\mu)$ starting from $x$ if and only if
there is global survival for $(Y,\nu)$ starting from $g(x)$  (see \cite[Theorem 4.3]{cf:Z1}).
%
%
It is not difficult to prove (the details can be found in \cite{cf:Z1} before Theorem 4.3) that, for all $x \in X$, $y \in Y$ and
$n \in \N$, 
$\widetilde m^{(n)}_{g(x)y} = \sum_{z \in g^{-1}(y)} m^{(n)}_{xz}$ where $\widetilde M$ is the first-moment matrix of the BRW $(Y,\nu)$.
This implies $M^{X}_w(x)=M^{Y}_w(g(x))$ for all $x \in X$.

In continuous time (see \cite{cf:BZ2})
one can prove that $(X,K)$ is \textit{locally isomorphic} to $(Y,\widetilde K)$ if and only if
there exists a surjective map $g:X \to Y$ such that
$\sum_{z \in g^{-1}(y)} k_{xz}=\widetilde k_{g(x)y}$
for all
$x \in X$ and $y \in Y$, whence 
%
$K^{X}_w(x)=K^{Y}_w(g(x))$ for all $x \in X$.
In other words, the total rate at which particles at $x$ generate children placing them in the set
of vertices with ``label'' $y$, depends only on $y$ and on $g(x)$.

Roughly speaking, an $\mathcal F$-BRW is a BRW where the vertices of $X$ can be labelled by means of
a finite alphabet $Y$ in such a way that the law of the labels of the positions of the children of 
a particle depends only on the label of the position of the father.  
As an example, consider a graph $(X,E(X))$ such that $\sup_{x \in X} \mathrm{deg}(x)<+\infty$ and where 
$\mathrm{deg}(x)=\mathrm{deg}(y)$ implies $\#\{z \in \mathcal{N}_x \colon \mathrm{deg}(z)=j\}=
\#\{z \in \mathcal{N}_y \colon \mathrm{deg}(z)=j\}$ (for all $j$); an example 
of such a graph is a tree with two alternating degrees.
In this case a BRW on $X$ with independent diffusion where $\rho_x$ depends only on $\mathrm{deg}(x)$  
is an $\mathcal{F}$-BRW and the label
of $x$ is $\mathrm{deg}(x)$.

It is worth mentioning a particular subclass of $\mathcal F$-BRWs:
a BRW is \textit{locally isomorphic to a branching process}
if and only if  the laws of the offspring number $\rho_x=\rho$ is independent
of $x \in X$. 
In this case the BRW is locally isomorphic to a BRW on a singleton $Y:=\{y\}$ where 
the law of the number of children of each particle is $\rho$ and $g(x):=y$ for all $x \in X$.
The explicit computations of $M_w$ and $M_s$ in this case can be found after equation~\eqref{eq:generalgeomparam}.
In particular a continuous-time BRW is 
locally isomorphic to a branching process if and only if $k(x)=k$ for all $x \in X$ 
(that is, if and only if it is an site-breeding BRW). In this case $K_w(x)=k$
and $K_s(x,y)= k\cdot \limsup_{n \to \infty} \sqrt[n]{p^{(n)}(x,y)}$. 


Let $\gamma:X \to X$ be an injective map. 
We say that $\mu=\{\mu_x\}_{x \in X}$ is $\gamma$-invariant if
for all $x
\in X$ and $f \in S_X$ we have
$\mu_x(f)=\mu_{\gamma(x)}( f \circ \gamma^{-1})$ (where $f \circ \gamma^{-1}$ is extended to a function
on $X$ by setting $0$ outside $\gamma(X)$). In particular, a BRW with independent diffusion is $\gamma$-invariant
if and only if $\rho_x=\rho_{\gamma(x)}$ and $p(x,y)=p(\gamma(x),\gamma(y))$ for all $x,y \in X$.

Moreover $(X,\mu)$ is \textit{quasi transitive} if and only if there exists
a finite subset $X_0 \subseteq X$ such that for all $x \in X$ there exists a bijective
map $\gamma:X\to X$ and $x_0 \in X_0$ satisfying $\gamma(x_0)=x$ and $\mu$ is $\gamma$-invariant.
An edge-breeding BRW on a graph $(X,E)$ is quasi transitive if and only if $(X,E)$ is a quasi-transitive graph. 


We note that every quasi-transitive BRW is an $\cF$-BRW (see \cite[Section 6.2]{cf:Z1}). 
The class of $\mathcal{F}$-BRWs is strictly larger than the class of quasi-transitive BRWs.
For instance consider the BRW described in Example~\ref{ex:nonstronglocalFBRW}.
Indeed, in this case 
the BRW is $\gamma$-invariant if and only if, for all $i,j \in \N$,
$p(\gamma(i),\gamma(j))=p(i,j)$. This implies that $\gamma(0)=0$ and, by induction, $\gamma(i)=i$ for all $i \in \N$.
Thus, the only invariant map $\gamma$ is the identity on $\N$, whence there is no finite $X_0$ as described in the definition
of quasi transitivity. Nevertheless, the BRW is locally isomorphic to a branching process, thus it is an $\mathcal{F}$-BRW.
Other examples are the edge-breeding BRWs associated to the following graphs: take a square and attach to each
vertex an infinite branch of a homogeneous tree $\mathbb{T}_3$ of degree 3 (see Figure~\ref{fig:square}); now attach to each vertex of the new graph
an edge with a new endpoint (see Figure~\ref{fig:square-end}). They are both $\mathcal{F}$-BRWs which are not quasi-transitive; moreover while
the first graph is regular (it has constant degree $3$), the second one is not since it has vertices with degree $4$ and vertices with degree
$1$.
\begin{figure}
    \begin{center}
    \subfigure[\tiny Regular graph with degree $3$.]{\includegraphics[height=5cm]{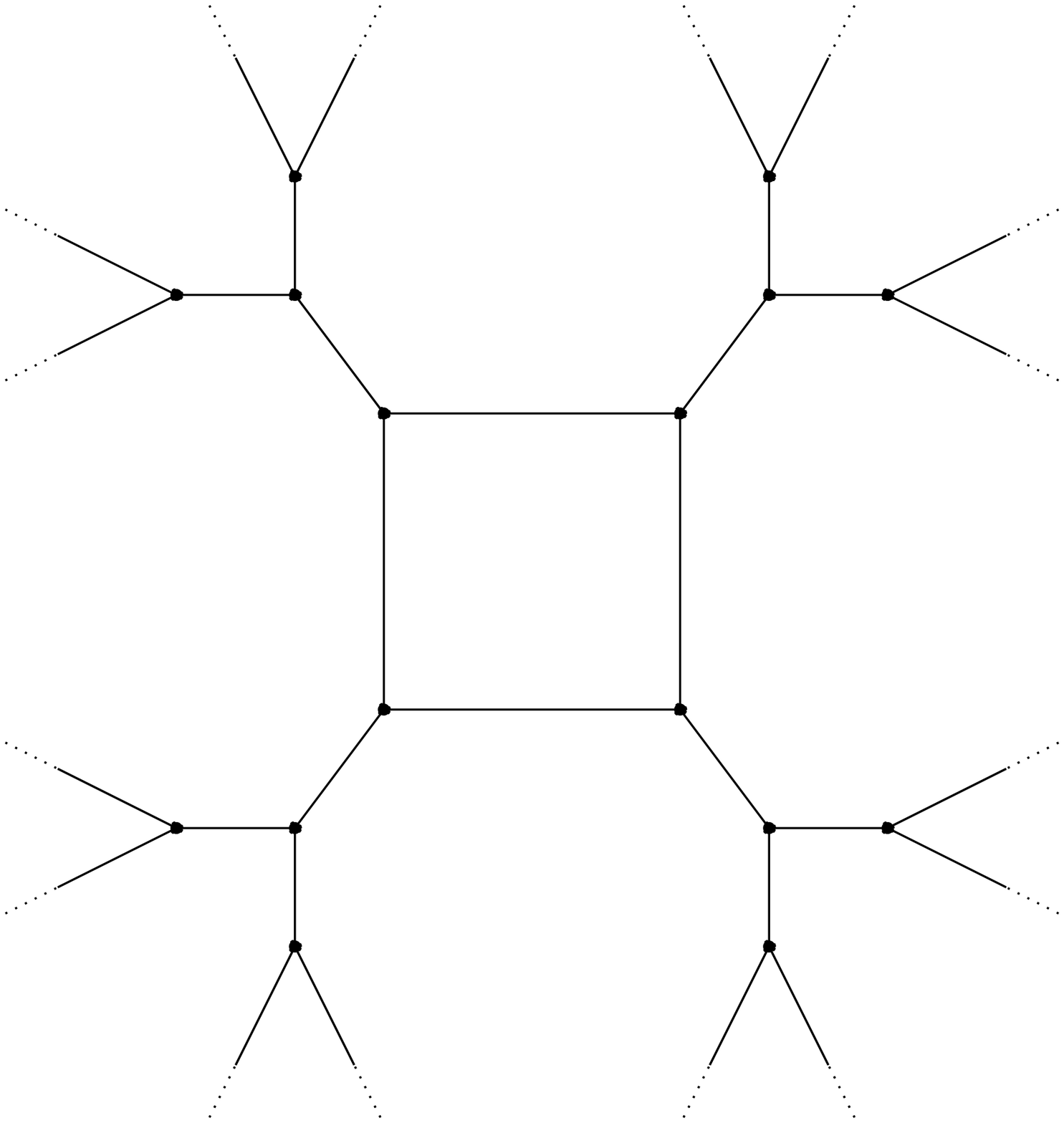}
    \label{fig:square}}
   \hspace{0.5cm}
    \subfigure[\tiny Irregular graph.]{\includegraphics[height=5cm]{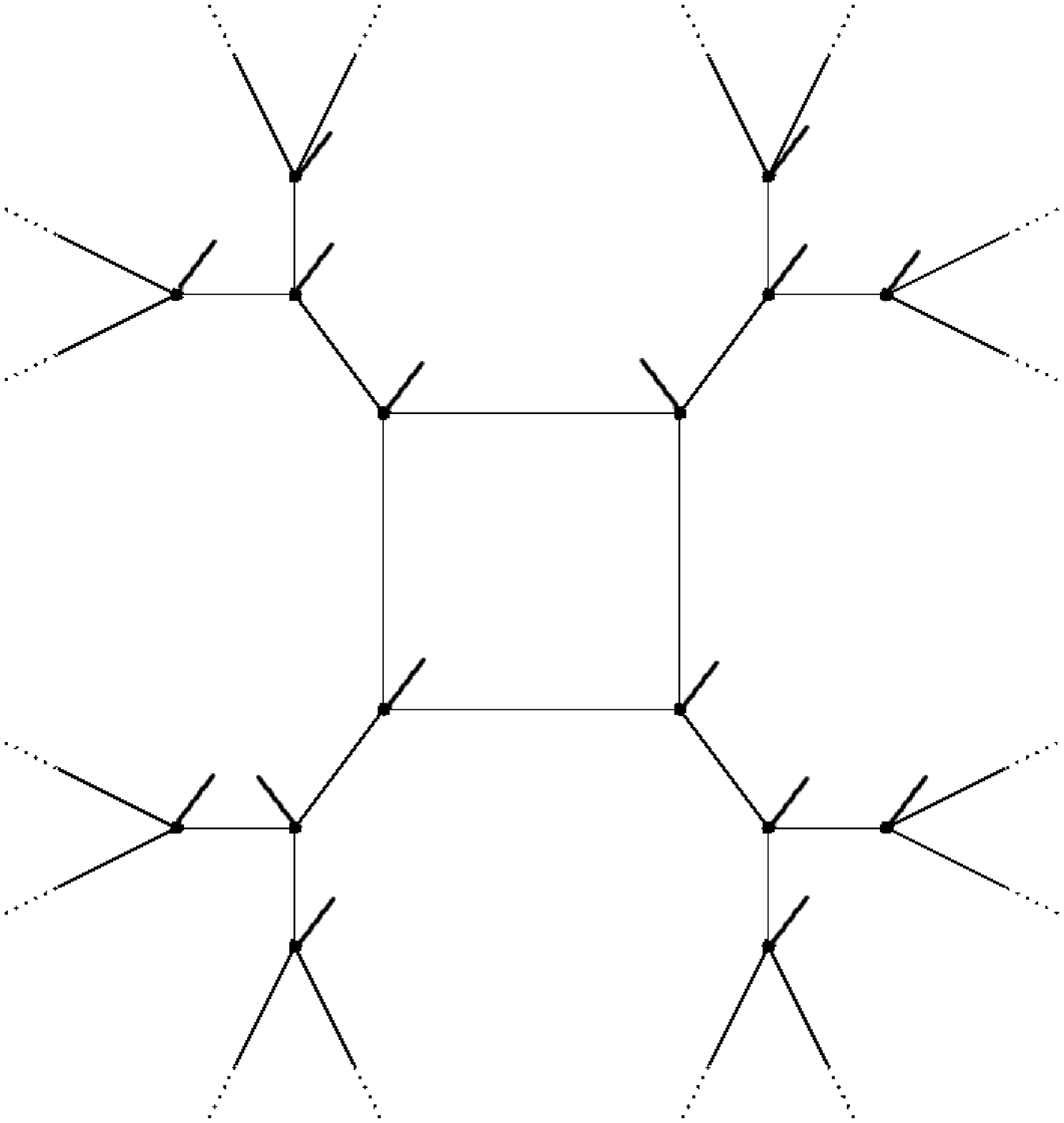}
    \label{fig:square-end}}
    \end{center}
\caption{}
\end{figure}

\section{Conditions for survival and extinction}
\label{sec:survival}

\subsection{Local and global survival}
\label{subsec:local}

The following theorems summarize the main results about local and global survival for discrete-time BRWs and
continuous-time BRWs respectively (see \cite[Theorems 4.1, 4.7, 4.8 and Proposition 4.5]{cf:BZ2}, \cite[Theorems 4.1 and 4.3]{cf:Z1}).  
In particular Theorem~\ref{th:discretesurv}(4) is a straightforward generalization of \cite[Theorem 3.6]{cf:BZ} (we omit the proof). 

\begin{teo}\label{th:discretesurv}
Let $(X,\mu)$ be a discrete-time BRW.
\begin{enumerate}
 \item 
There is
local survival starting from $x$ if and only if $M_s(x,x) 
>1$.
\item
There is global survival starting from $x$ if and only if there exists
${\mathbf{z}}\in [0,1]^X$, ${\mathbf{z}}(x)<1$
such that $G({\mathbf{z}}|y) = {\mathbf{z}}(y)$, for all $y \in X$
(equivalently, such that $G({\mathbf{z}}|y) \le {\mathbf{z}}(y)$, for all $y \in X$).
\item
If $(X,\mu)$ is an $\mathcal F$-BRW then
there is global survival starting from $x$ if and only if $M_w(x)>1$.
\item If $(X,\mu)$ is an irreducible, non-oriented BRW then $M_s < M_w$ if and only if 
$(X,\mu)$ is nonamenable. 
\end{enumerate}
\end{teo}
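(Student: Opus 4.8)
I would treat these three parts as the moment and generating-function machinery already recalled in the excerpt, so I would keep their proofs short. For (1), local survival at $x$ is governed by the total expected number of returns $\sum_n m_{xx}^{(n)}$: if $M_s(x,x)<1$ this series converges and a first-moment (Borel--Cantelli) argument forces $\limsup_n \eta_n(x)=0$ almost surely, hence local extinction; if $M_s(x,x)>1$ I would produce local survival by a second-moment / embedded-supercritical-process comparison along the return structure at $x$. For (2), recall that $\bar{\mathbf q}=\lim_n G^{(n)}(\mathbf 0)$ is the smallest fixed point of the monotone, continuous map $G$, and that global survival from $x$ means exactly $\bar{\mathbf q}(x)<1$. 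If $G(\mathbf z)\le\mathbf z$ with $\mathbf z(x)<1$, then monotonicity of $G$ together with $\mathbf 0\le\mathbf z$ gives $G^{(n)}(\mathbf 0)\le\mathbf z$ for all $n$, whence $\bar{\mathbf q}(x)\le\mathbf z(x)<1$; conversely $\bar{\mathbf q}$ itself is such a fixed point. For (3), I would use the local isomorphism: by~\eqref{eq:Gfunctions} and $\bar{\mathbf q}=\lim_n G^{(n)}(\mathbf 0)$, global survival for $(X,\mu)$ from $x$ is equivalent to global survival for the finite BRW $(Y,\nu)$ from $g(x)$, and $M^X_w(x)=M^Y_w(g(x))$; on the finite state space $Y$ the threshold $M^Y_w>1$ follows from Perron--Frobenius applied to the first-moment matrix of $(Y,\nu)$.

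\textbf{The core: part (4).} Here $(X,\mu)$ is irreducible and non-oriented, so $M=(m_{xy})$ is a symmetric, irreducible, nonnegative matrix with $\sup_x\sum_y m_{xy}<\infty$, and thus acts as a bounded self-adjoint operator on $\ell^2(X)$. The plan is to identify the two geometric quantities spectrally: by the spectral theorem $M_s=M_s(x,x)=\limsup_n (m_{xx}^{(n)})^{1/n}=\|M\|_{\ell^2\to\ell^2}$, the $\ell^2$ spectral radius (independent of $x$ by irreducibility), whereas $M_w$ is the $\ell^\infty$ (equivalently, by symmetry, $\ell^1$) growth rate of the powers of $M$. Since the inequality $M_s\le M_w$ holds in general, everything reduces to deciding when it is strict, and the claim is that strictness is precisely Kesten's dichotomy between $\ell^2$ and $\ell^\infty$ spectral behaviour.

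\textbf{The two directions.} For ``amenable $\Rightarrow M_s=M_w$'' I would exhibit approximate eigenvectors. Amenability produces finite sets $S_k$ with $\big(\sum_{x\in S_k,\,y\notin S_k} m_{xy}\big)/|S_k|\to 0$; testing $M$ against the normalized indicators $\phi_k=\mathbf 1_{S_k}/\|\mathbf 1_{S_k}\|_2$ and using symmetry to discard the boundary terms, I would show $\langle M\phi_k,\phi_k\rangle\to M_w$, whence $M_s=\|M\|_{\ell^2\to\ell^2}\ge M_w$ and equality follows. For ``nonamenable $\Rightarrow M_s<M_w$'' I would run the converse, Cheeger-type estimate: the hypothesis gives a positive isoperimetric constant $h=\inf_S \big(\sum_{x\in S,\,y\notin S} m_{xy}\big)/|S|>0$, and a discrete Cheeger inequality for the symmetric operator $M$ then bounds $\|M\|_{\ell^2\to\ell^2}$ strictly below the $\ell^\infty$ growth level $M_w$, yielding the spectral gap $M_s<M_w$.

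\textbf{Main obstacle.} The delicate step is that, unlike the regular-graph case of \cite[Theorem 3.6]{cf:BZ} where the row sums all equal the degree and $M_w$ is literally that degree, here the row sums $\sum_y m_{xy}$ may vary with $x$, while $M_w$ is defined through the $\liminf$ of $(\sum_y m_{xy}^{(n)})^{1/n}$. Hence the real work is the clean identification of $M_w$ with the $\ell^\infty$-operator growth rate of $M$ in the irreducible symmetric setting, together with carrying both the F\o lner test-function computation and the Cheeger estimate in weighted form so that the threshold they produce is exactly $M_w$ rather than the (possibly larger) supremum of the row sums. Once this identification is in place, the amenability equivalence is the standard Kesten argument and the generalization is indeed routine.
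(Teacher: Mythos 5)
The paper itself does not prove this theorem: parts (1)--(3) are quoted from \cite{cf:BZ2} and \cite{cf:Z1}, and part (4) is declared ``a straightforward generalization of \cite[Theorem 3.6]{cf:BZ}'' with the proof omitted, so your sketch must be judged on its own merits. Parts (2) and (3) are the standard monotone-fixed-point and local-isomorphism arguments and are fine as sketches. The other two parts have genuine gaps.

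In part (1) your two cases do not cover the critical one. Borel--Cantelli handles $M_s(x,x)<1$, where $\sum_n m^{(n)}_{xx}<\infty$, and a second-moment argument handles $M_s(x,x)>1$, but the ``only if'' direction also requires local extinction when $M_s(x,x)=1$, where $\sum_n m^{(n)}_{xx}$ may diverge and the first-moment argument says nothing. The standard repair (used in the cited sources) is to pass to the embedded Galton--Watson process of first-return descendants at $x$: its mean is $\le 1$ exactly when $M_s(x,x)\le 1$ (via the first-return generating function identity), and a non-degenerate Galton--Watson process with mean $\le 1$ dies out; Assumption~\ref{assump:1} rules out the degenerate case.

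In part (4) the amenable direction fails as written. Testing $M$ against $\phi_k=\mathbf 1_{S_k}/\sqrt{|S_k|}$ gives $\langle M\phi_k,\phi_k\rangle=\frac1{|S_k|}\sum_{x\in S_k}\bar\rho_x-o(1)$, i.e.\ the \emph{average one-step row sum over the F\o lner sets}, which in general is not $M_w$: that quantity is a $\liminf$ of $n$-step row sums at a fixed basepoint and can be driven by mass the F\o lner sets never see. Concretely, take $X=\Z$ with $m_{x,x\pm1}=1$ and an extra loop $m_{00}=100$. The sets $[-k,k]$ witness amenability and give $\langle M\phi_k,\phi_k\rangle\to2$, while $M_w\ge M_s\ge100$; so your limit is false and your argument only yields $M_s\ge 2$. (The conclusion $M_s=M_w$ does hold in that example, but for a different reason, e.g.\ $\sum_y m^{(n)}_{0y}\le\sqrt{2n+1}\,\|M\|_{2\to2}^{\,n}$ by Cauchy--Schwarz on the support of $M^n\delta_0$.) For the same reason $M_w$ is not $\lim_n\|M^n\|_{\infty\to\infty}^{1/n}$ in general, and the Cheeger step must be run against $M_w$ with the paper's cardinality-normalized isoperimetric constant rather than against $\sup_x\bar\rho_x$. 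You correctly flag this identification as ``the real work'', but it is not a routine weighted rewriting of Kesten's argument: it is precisely the content of \cite[Theorem 3.6]{cf:BZ} and of the generalization the authors invoke, and your sketch does not supply it.
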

Note that the fact that there is local survival or not, depends only on the first-moment matrix $M$.
%
%
%
In particular the BRW survives locally at $x$ if and only if it does so when restricted to the irreducibility
class $[x]$ of $x$.
It is worth noting that if $[x]$
is finite, then $M_s(x,x)$
is the Perron-Frobenius eigenvalue
of the submatrix $M^\prime:=(m_{yz})_{y,z \in [x]}$. In this case
there is local survival at $x$ if and only if $\max\{t >0 \colon 
\exists v \not = \mathbf{0}, \, M^\prime v =t v\}>1$.
In general, the global behavior does not depend only on $M$ 
(see \cite[Example 4.4]{cf:Z1}) unless
there is a
one-to-one correspondence
between first moment matrices and processes.
This is true for instance in the class of
BRWs with independent diffusion 
such that  $\rho_x(n)=\frac{1}{1+\bar \rho_x} (\frac{\bar \rho_x}{1+\bar \rho_x} )^n$
(hence for a continuous-time BRW). Indeed 
in that case an equivalent condition
for global survival starting from $x \in X$ is the existence of 
${\mathbf{v}} \in [0,1]^X$, ${\mathbf{v}}(x)>0$ such that
\[
 M{\mathbf{v}} \ge {\mathbf{v}} /(\mathbf{1}-{\mathbf{v}}), \qquad \text{(equivalently, }M{\mathbf{v}} = {\mathbf{v}} /(\mathbf{1}-{\mathbf{v}}) \text{)}
\]
which comes from Theorem~\ref{th:discretesurv}(2) given $\mathbf{z}:=\mathbf{1}-{\mathbf{v}}$ and
the explicit expression \eqref{eq:Gcontinuous} of $G$.
In particular, for a BRW with independent diffusion, the local survival probability $\mathbf{v}_A:=\mathbf{1}-\mathbf{q}(\cdot,A)$
satisfies $M \mathbf{v}_A=\mathbf{v}_A/(\mathbf{1}-\mathbf{v}_A)$, which becomes
$\lambda K  \mathbf{v}_A=\mathbf{v}_A/(\mathbf{1}-\mathbf{v}_A)$ for a continuous-time BRW.

\begin{teo}\label{th:continuoussurv}
Let $(X,K)$ be a continuous-time BRW.
\begin{enumerate}
 \item 
$\lambda_s(x)=1/K_s(x,x)$ and if 
$\lambda=\lambda_s(x)$ then there is local extinction at $x$.
\item
$
 \lambda_w(x) \ge 1/K_w(x).
$
\item If $(X,K)$ is an $\mathcal F$-BRWs then  
$\lambda_w(x)=1/K_w(x)$ and when $\lambda=\lambda_w(x)$ there
is global extinction starting from $x$.
\item If $(X,K)$ is an irreducible, non-oriented $\mathcal{F}$-BRW then $\lambda_s > \lambda_w$ if and only if 
$(X,\mu)$ is nonamenable.
\end{enumerate}
\end{teo}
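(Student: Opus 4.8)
The plan is to reduce everything to the discrete-time statement Theorem~\ref{th:discretesurv} via the discrete-time counterpart. Recall from Section~\ref{subsec:continuous} that a continuous-time BRW $(X,K)$ and its discrete-time counterpart have the same local, strong local and global survival behaviour, and that $m_{xy}=\lambda k_{xy}$. Since the counterpart is a BRW with independent diffusion of the form \eqref{eq:counterpart}, this is the matrix to which Theorem~\ref{th:discretesurv} applies. The arithmetic backbone is the identity $m_{xy}^{(n)}=\lambda^n k_{xy}^{(n)}$, which upon taking $n$-th roots gives $M_s(x,x)=\lambda K_s(x,x)$ and $M_w(x)=\lambda K_w(x)$, exactly as in the definitions of $K_s,K_w$; crucially $K_s(x,x)$ and $K_w(x)$ depend only on $K$ and not on $\lambda$, so that the whole $\lambda$-dependence of the growth parameters is the prefactor.

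For part (1), I would argue that there is local survival at $x$ if and only if $M_s(x,x)>1$ by Theorem~\ref{th:discretesurv}(1), which rewrites as $\lambda K_s(x,x)>1$, i.e. $\lambda>1/K_s(x,x)$. Taking the infimum over such $\lambda$ in the definition of $\lambda_s(x)$ yields $\lambda_s(x)=1/K_s(x,x)$. At the critical value $\lambda=\lambda_s(x)$ we have $M_s(x,x)=\lambda K_s(x,x)=1$, so the strict inequality $M_s(x,x)>1$ fails and there is local extinction at $x$. For part (2), I would use the necessary first-moment bound: the expected total population at generation $n$ started from $x$ is $\sum_{y}m_{xy}^{(n)}$, and if $M_w(x)<1$ then $\liminf_n\sqrt[n]{\sum_y m_{xy}^{(n)}}<1$ forces $\sum_y m_{xy}^{(n)}\to 0$ along a subsequence, whence Markov's inequality gives global extinction. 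Thus global survival requires $M_w(x)\ge 1$, i.e. $\lambda K_w(x)\ge 1$; so any $\lambda<1/K_w(x)$ is globally subcritical and $\lambda_w(x)\ge 1/K_w(x)$.

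Part (3) replaces this inequality by the sharp characterisation available for $\mathcal F$-BRWs: Theorem~\ref{th:discretesurv}(3) gives global survival from $x$ if and only if $M_w(x)>1$, i.e. $\lambda>1/K_w(x)$, so $\lambda_w(x)=1/K_w(x)$, and at $\lambda=\lambda_w(x)$ the value $M_w(x)=1$ again violates the strict inequality, giving global extinction. For part (4), combining parts (1) and (3) gives $\lambda_s=1/K_s(x,x)$ and $\lambda_w=1/K_w(x)$ (independent of $x$ by irreducibility), so that $\lambda_s>\lambda_w$ is equivalent to $K_s(x,x)<K_w(x)$, and multiplying through by any fixed $\lambda$ this is equivalent to $M_s<M_w$; Theorem~\ref{th:discretesurv}(4) then identifies this with nonamenability, where I note that nonamenability of $(X,K)$ is by definition that of its discrete-time counterpart, so the two notions match.

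The routine steps are the algebraic rewritings $M_\bullet=\lambda K_\bullet$ and the passage to the infimum defining the critical parameters. The delicate points, which I would state carefully rather than merely assert, are the \emph{boundary} claims in parts (1) and (3): that the process goes extinct (locally, respectively globally) exactly at $\lambda=\lambda_s(x)$ and $\lambda=\lambda_w(x)$. These rest entirely on the fact that the survival criteria of Theorem~\ref{th:discretesurv}(1) and (3) are \emph{strict} inequalities, so that criticality corresponds to $M_s(x,x)=1$, respectively $M_w(x)=1$, and hence to extinction; this is the main place where one must invoke the precise form of the discrete-time results rather than just their monotone consequences.
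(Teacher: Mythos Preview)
Your reduction is correct and is exactly the route the paper intends: the paper does not prove Theorem~\ref{th:continuoussurv} at all but states it as a summary of known results (citing \cite{cf:BZ2, cf:Z1}), having set up precisely the identities $M_s(x,x)=\lambda K_s(x,x)$ and $M_w(x)=\lambda K_w(x)$ in Section~\ref{subsec:continuous} so that the continuous-time statement becomes a direct transcription of Theorem~\ref{th:discretesurv}. Your handling of the critical cases via the \emph{strictness} of the inequalities in Theorem~\ref{th:discretesurv}(1) and (3), and your self-contained first-moment argument for part~(2), are both sound; the only thing to add is that the passage ``$(X,K)$ is an $\mathcal F$-BRW $\Rightarrow$ its discrete-time counterpart is an $\mathcal F$-BRW'' (needed to invoke Theorem~\ref{th:discretesurv}(3) in your part~(3)) is justified by the paper's convention that a continuous-time BRW has a property if and only if its discrete-time counterpart does.
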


For a generic BRW when $\lambda=\lambda_w(x)$ there might be global survival
(see \cite[Example 3]{cf:BZ2}).
A characterization of $\lambda_w(x)$ has been given in \cite[Theorem 4.2]{cf:BZ2}
by means of the so-called
 \textit{lower
Collatz-Wielandt number}.

\subsection{Probabilities of extinction and strong local survival}
\label{subsec:survivalprob}

Define ${\mathbf{q}}_n(x,A)$ as the probability of extinction in $A$ no later than the $n$-th
generation starting with one particle at $x$, namely
${\mathbf{q}}_n(x,A)=\Prob(\eta_k(x)=0,\, \forall k\ge n,\,\forall x\in A)$. It is
clear that $\{{\mathbf{q}}_n(x,A)\}_{n \in \N}$ is a nondecreasing sequence satisfying
\begin{equation}\label{eq:extprobab}
 \begin{cases}
 {\mathbf{q}}_{n}(\cdot,A)=G({\mathbf{q}}_{n-1}(\cdot, A)),& \quad \forall n \ge 1\\
 {\mathbf{q}}_0(x,A)=0, &\quad \forall x \in A,
\end{cases}
\end{equation}
hence there is a limit ${\mathbf{q}}(x,A)=\lim_{n \to \infty} {\mathbf{q}}_n(x, A) \in
[0,1]^X$ which is the probability of local extinction in $A$
starting with one particle at $x$ (see
Definition~\ref{def:survival}). Note that equation~\eqref{eq:extprobab}
defines completely the sequence $\{{\mathbf{q}}_n(\cdot,A)\}_{n \in \N}$ only when
$A=X$ (otherwise one needs the values ${\mathbf{q}}_0(x,A)$ for $x \not \in A$).
Since $G$ is continuous we have that ${\mathbf{q}}(\cdot,A)=G({\mathbf{q}}(\cdot,
A))$, hence these extinction probabilities are
fixed points of $G$.

Note that ${\mathbf{q}}(\cdot, \emptyset)= \mathbf 1$.
%
Since $\bar {\mathbf{q}}=\lim_{n \to \infty} G^{(n)}(\mathbf{0})$ we have that
$\bar {\mathbf{q}}$ is the smallest fixed point of $G$ in $[0,1]^X$ (see \cite[Corollary 2.2]{cf:BZ2}).
Using the same arguments, one can prove that $\bar {\mathbf{q}}$ is the smallest
fixed point of $G^{(m)}$ for all $m \in \N$.

Note that $A \subseteq B \subseteq X$ implies ${\mathbf{q}}(\cdot,A)
\ge {\mathbf{q}}(\cdot,B)
\ge \bar {\mathbf{q}}$. Since for all finite $A\subseteq X$ we have ${\mathbf{q}}(x,A) \ge 1-\sum_{y \in A} (1-{\mathbf{q}}(x,y))$
then, for any given finite $A \subseteq X$, ${\mathbf{q}}(x,A)=1$ if and only if ${\mathbf{q}}(x,y)=1$ for all $y \in A$.
%

If $x \to x^\prime$ and $A \subseteq X$ then ${\mathbf{q}}(x^\prime,A)<1$ implies
${\mathbf{q}}(x,A)<1$; as a consequence,
if $x \rightleftharpoons x^\prime$ then
${\mathbf{q}}(x,A)<1$ if and only if ${\mathbf{q}}(x^\prime,A)<1$. Moreover if  $y \rightleftharpoons y^\prime$ 
we have ${\mathbf{q}}(x,y)={\mathbf{q}}(x,y^\prime)$ for all $x \in X$.

In the irreducible case, if $\rho_x(0)>0$ for all $x \in X$, we have that $\bar {\mathbf{q}}(x)={\mathbf{q}}(x,A)$ for
some $x \in X$ and a finite subset $A \subseteq X$ if and only if $\bar {\mathbf{q}}(y)={\mathbf{q}}(y,B)$ for
all $y \in X$ and all finite subsets $B \subseteq X$ (hence, strong local survival is a common property
of all subsets and all starting vertices).
Clearly, this may not be true in the reducible case. 
Besides, if we drop the assumption $\rho_x(0)>0$ for all $x \in X$, we might actually
have $\bar {\mathbf{q}}(x)={\mathbf{q}}(x,A)<1$ and $\bar {\mathbf{q}}(y)<{\mathbf{q}}(y,A)$ for some $x,y \in X$ and a finite $A \subseteq X$
 even when the BRW is irreducible (see Example~\ref{exm:irreduciblenotstrongeverywhere}).
Hence, in general, even for irreducible BRWs, 
strong local survival is not a common property of all vertices
 as local and global survival are.

The following theorem, in the case of global survival, gives equivalent conditions for strong local survival
in terms of extinction probabilities.

\begin{teo}\label{rem:strongconditioned}
We observe that the following assertions are equivalent for every nonempty subset $A \subseteq X$.
\begin{enumerate}[(1)]
\item  ${\mathbf{q}}(x, A) = \bar {\mathbf{q}}(x)$, for all $x \in X$;
\item ${\mathbf{q}}_0(x,A) \le \bar {\mathbf{q}}(x)$, for all $x \in X$;
\item  the probability of visiting $A$ at least once starting from $x$ is larger than or equal to the probability of
global survival starting from $x$, for all $x \in X$:
\item for all $x \in X$, either $\bar {\mathbf{q}}(x)=1$ or
the probability of visiting $A$ at least once starting from $x$ conditioned on global survival starting from $x$ is $1$;
\item  for all $x \in X$, either $\bar {\mathbf{q}}(x)=1$ or
the probability of local survival in $A$ starting from $x$ conditioned on global survival starting from $x$ is $1$
(strong local survival in $A$ starting from $x$).
\end{enumerate}
\end{teo}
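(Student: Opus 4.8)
The plan is to fix a nonempty $A\subseteq X$, record a short probabilistic dictionary, and then run the cycle $(1)\Rightarrow(5)\Rightarrow(4)\Rightarrow(2)\Rightarrow(1)$ together with the side equivalence $(2)\Leftrightarrow(3)$. The three facts I would establish first are: (a) iterating \eqref{eq:extprobab} gives ${\mathbf{q}}_n(\cdot,A)=G^{(n)}({\mathbf{q}}_0(\cdot,A))$, hence ${\mathbf{q}}(\cdot,A)=\lim_n G^{(n)}({\mathbf{q}}_0(\cdot,A))$; (b) ${\mathbf{q}}_0(x,A)=\pr^{\delta_x}(\eta_k(y)=0\ \forall k\ge 0,\ \forall y\in A)$ is precisely the probability of \emph{never} visiting $A$, so $1-{\mathbf{q}}_0(x,A)$ is the probability of visiting $A$ at least once; and (c) the event of local survival in $A$ (visiting $A$ infinitely often) is contained in the event of global survival, so its probability equals $1-{\mathbf{q}}(x,A)$ both conditionally and unconditionally.

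With (b), the equivalence $(2)\Leftrightarrow(3)$ is immediate: ${\mathbf{q}}_0(x,A)\le\bar{\mathbf{q}}(x)$ says exactly that the probability $1-{\mathbf{q}}_0(x,A)$ of visiting $A$ is at least the probability $1-\bar{\mathbf{q}}(x)$ of global survival. For $(1)\Leftrightarrow(5)$ I would use (c): when $\bar{\mathbf{q}}(x)<1$ the conditional probability of local survival in $A$ given global survival equals $(1-{\mathbf{q}}(x,A))/(1-\bar{\mathbf{q}}(x))$, which is $1$ iff ${\mathbf{q}}(x,A)=\bar{\mathbf{q}}(x)$, while when $\bar{\mathbf{q}}(x)=1$ the first alternative of $(5)$ holds and the always-valid bound ${\mathbf{q}}(x,A)\ge\bar{\mathbf{q}}(x)=1$ forces equality, so $(1)$ holds too. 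The implication $(5)\Rightarrow(4)$ is then trivial, since visiting $A$ infinitely often forces visiting it at least once, so the weaker conditional event inherits conditional probability $1$.

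The two implications carrying the real content are $(2)\Rightarrow(1)$ and $(4)\Rightarrow(2)$. For $(2)\Rightarrow(1)$, assuming ${\mathbf{q}}_0(\cdot,A)\le\bar{\mathbf{q}}$ I would apply the monotone map $G^{(n)}$ and use that $\bar{\mathbf{q}}$ is a fixed point ($G^{(n)}(\bar{\mathbf{q}})=\bar{\mathbf{q}}$) to obtain ${\mathbf{q}}_n(\cdot,A)=G^{(n)}({\mathbf{q}}_0(\cdot,A))\le\bar{\mathbf{q}}$; letting $n\to\infty$ gives ${\mathbf{q}}(\cdot,A)\le\bar{\mathbf{q}}$, and combining with ${\mathbf{q}}(\cdot,A)\ge{\mathbf{q}}(\cdot,X)=\bar{\mathbf{q}}$ yields $(1)$. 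For $(4)\Rightarrow(2)$, fix $x$: if $\bar{\mathbf{q}}(x)=1$ then $(2)$ is trivial, and if $\bar{\mathbf{q}}(x)<1$ then $(4)$ asserts $\pr^{\delta_x}(\{\text{never visit }A\}\cap\{\text{global survival}\})=0$, so splitting over the survival/extinction dichotomy gives ${\mathbf{q}}_0(x,A)=\pr^{\delta_x}(\{\text{never visit }A\}\cap\{\text{extinction}\})\le\bar{\mathbf{q}}(x)$. I expect $(4)\Rightarrow(2)$, in tandem with $(2)\Rightarrow(1)$, to be the crux: the point is that conditions $(3)$–$(4)$, phrased only in terms of visiting $A$ \emph{once}, are a priori weaker than the strong-local-survival statements $(1)$ and $(5)$, yet the single-step bound ${\mathbf{q}}_0\le\bar{\mathbf{q}}$ they produce is then upgraded by the monotone iteration of $G$ (anchored at the fixed point $\bar{\mathbf{q}}$) into the full domination ${\mathbf{q}}(\cdot,A)\le\bar{\mathbf{q}}$, which is exactly where ``visit once'' gets promoted to ``visit infinitely often''.
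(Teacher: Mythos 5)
Your proposal is correct and follows essentially the same route as the paper: the paper also combines the probabilistic reading of $\mathbf{q}_0(x,A)$ as the probability of never visiting $A$, the inclusion of the local-survival event in the global-survival event (giving $(1)\Leftrightarrow(5)\Rightarrow(4)$ and $(2)\Leftrightarrow(3)$), and the monotone iteration of $G$ anchored at the smallest fixed point $\bar{\mathbf{q}}$ to pass between $\mathbf{q}_0(\cdot,A)\le\bar{\mathbf{q}}$ and $\mathbf{q}(\cdot,A)=\bar{\mathbf{q}}$. The only difference is cosmetic: you close the cycle via a direct $(4)\Rightarrow(2)$ where the paper goes $(4)\Rightarrow(3)\Leftrightarrow(2)$, which is the same computation.
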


From this theorem we have that 
if there exists $x \in X$ such that ${\mathbf{q}}(x,A)>\bar {\mathbf{q}}(x)$ (that is, there is a positive probability of global survival and
nonlocal survival in $A$ starting from $x$) then there exists $y \in X$ such that
${\mathbf{q}}_0(y,A)>\bar {\mathbf{q}}(y)$ (that is, there is a positive probability that the colony survives globally starting from $y$ without
ever visiting $A$). Note that, ${\mathbf{q}}_0(x,A)>\bar {\mathbf{q}}(x)$ implies ${\mathbf{q}}(x,A)>\bar {\mathbf{q}}(x)$ but 
the converse is not true.
In particular for a BRW with no death there is strong local survival in $A$ starting from $x$ for all $x \in X$ if and only if
the probability of visiting $A$ is $1$ starting from every vertex.

We note that, \textit{a priori}, there is no order relation between the events
``visiting $A$ at least once starting from $x$'' and ``global survival starting from $x$''. Nevertheless
if, for all $x \in X$,  the probability of ``visiting $A$ at least once starting from $x$'' is larger than or equal to
the probability of ``global survival starting from $x$'' then, by Theorem~\ref{rem:strongconditioned}
we have that the probability of ``global survival starting from $x$ never visiting $A$'' is $0$
and this implies, whenever $\bar {\mathbf{q}}(x)<1$, that there is strong local survival in $A$
starting from $x$.

In the case of an $\mathcal F$-BRW the fixed-points of $G$
have an interesting property stated in the following theorem.
\begin{teo}\label{th:Fgraph}
 Let $(X, \mu)$ be an $\mathcal F$-BRW.
\begin{enumerate} 
 \item 
There exists at most one fixed point ${\mathbf{z}}$ for $G$ such that $\sup_{x \in X} {\mathbf{z}}(x)<1$,
namely ${\mathbf{z}}=\bar {\mathbf{q}}$.
\item
For all $x \in X$, either
${\mathbf{q}}(\cdot,x) = \bar {\mathbf{q}}(\cdot)$ or $\sup_{w \in X} {\mathbf{q}}(w,x)= 1$. In particular when $(X,\mu)$ is irreducible then
it is either ${\mathbf{q}}(x,x)=\bar {\mathbf{q}}(x)$ for all $x \in X$ or $\sup_{x \in X} {\mathbf{q}}(x,x)= 1$.
\end{enumerate}
\end{teo}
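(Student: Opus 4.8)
The plan is to establish (1) first and then read off (2) from it. I record straight away that (2) follows from (1): for fixed $x\in X$ the extinction probability ${\mathbf q}(\cdot,x)$ is a fixed point of $G$ with ${\mathbf q}(\cdot,x)\ge\bar{\mathbf q}$, so by (1) either $\sup_{w}{\mathbf q}(w,x)=1$, or $\sup_w{\mathbf q}(w,x)<1$ and then ${\mathbf q}(\cdot,x)=\bar{\mathbf q}(\cdot)$; this is exactly the stated dichotomy. When $(X,\mu)$ is irreducible all vertices communicate and ${\mathbf q}(w,y)={\mathbf q}(w,w)$ for every $w,y$, so applying the dichotomy to one vertex $x_0$ and substituting ${\mathbf q}(w,x_0)={\mathbf q}(w,w)$ gives either ${\mathbf q}(x,x)=\bar{\mathbf q}(x)$ for all $x$ or $\sup_x{\mathbf q}(x,x)=1$.

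For (1), let $g\colon X\to Y$ be the local isomorphism onto the finite set $Y$, so that $G({\mathbf z}\circ g\,|\,x)=G_Y({\mathbf z}\,|\,g(x))$ by \eqref{eq:Gfunctions}, and recall (iterating \eqref{eq:Gfunctions} on $\mathbf 0=\mathbf 0\circ g$ and using $\bar{\mathbf q}=\lim_nG^{(n)}(\mathbf 0)$) that $\bar{\mathbf q}=\bar{\mathbf q}_Y\circ g$. Let ${\mathbf z}$ be a fixed point of $G$ with $\sup_x{\mathbf z}(x)=c<1$. Since $\bar{\mathbf q}$ is the smallest fixed point, ${\mathbf z}\ge\bar{\mathbf q}$ and $\sup_x\bar{\mathbf q}(x)\le c<1$. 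The first step is to push ${\mathbf z}$ down to $Y$ by setting ${\mathbf z}_Y(y):=\sup_{x\in g^{-1}(y)}{\mathbf z}(x)$. Then ${\mathbf z}\le{\mathbf z}_Y\circ g$, and monotonicity of $G$ together with the fixed-point identity and \eqref{eq:Gfunctions} give, for every $x\in g^{-1}(y)$,
\[
{\mathbf z}(x)=G({\mathbf z}\,|\,x)\le G({\mathbf z}_Y\circ g\,|\,x)=G_Y({\mathbf z}_Y\,|\,y),
\]
whence, taking the supremum over the fibre, $G_Y({\mathbf z}_Y)\ge{\mathbf z}_Y$ on the finite set $Y$, while ${\mathbf z}_Y\ge\bar{\mathbf q}_Y$ and $\sup_Y{\mathbf z}_Y=c<1$.

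Next I apply the maximum principle (Proposition~\ref{pro:maximumprinciple}) on $Y$ to ${\mathbf z}_Y$. Writing $\widehat{{\mathbf z}_Y}=({\mathbf z}_Y-\bar{\mathbf q}_Y)/(\mathbf 1-\bar{\mathbf q}_Y)$, a short computation (the map $t\mapsto(c-t)/(1-t)$ is decreasing) shows $\sup_Y\widehat{{\mathbf z}_Y}\le c<1$; call this supremum $s$. As $Y$ is finite, $s$ is attained at some $y_0$, and since $s$ is maximal the reachable-set form of the principle forces $\widehat{{\mathbf z}_Y}(z)=s$ for all $z$ with $y_0\to z$. Because $\bar{\mathbf q}_Y<\mathbf 1$ every vertex reproduces and hence has an outgoing edge, so from $y_0$ one reaches a closed communicating class, i.e.\ a final irreducible class $C\subseteq Y$; thus $\widehat{{\mathbf z}_Y}\equiv s$ on $C$. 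Since $C$ is final, evaluating the inequality $G_Y({\mathbf z}_Y)\ge{\mathbf z}_Y$ there and undoing the change of variables of \eqref{eq:G-one} shows that the generating function $\widehat G_Y$ of the associated no-death BRW satisfies $\widehat G_Y(s\mathbf 1\,|\,z)=\sum_{n}\rho^{\,\mathrm{nd}}_z(n)\,s^n=:h_z(s)\ge s$ for all $z\in C$, where $\rho^{\,\mathrm{nd}}_z$ is the offspring law at $z$ of that no-death BRW and $h_z(0)=0$.

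The crux is to force $s=0$. Because $\sup_Y{\mathbf z}_Y<1$ we have $\bar{\mathbf q}_Y|_C<\mathbf 1$, i.e.\ global survival inside the finite irreducible class $C$, so by Theorem~\ref{th:discretesurv} the Perron--Frobenius eigenvalue of $(m^Y_{zz'})_{z,z'\in C}$ exceeds $1$; this is precisely where Assumption~\ref{assump:1} is used, to exclude the degenerate ``one offspring almost surely'' regime (such as a deterministic cycle) in which survival could coexist with eigenvalue $1$. If every $z\in C$ had $\rho^Y_z(\{0,1\})=1$, all row sums of this submatrix would be $\le1$ and its spectral radius $\le1$; therefore some $z^\ast\in C$ has $\rho^Y_{z^\ast}(n)>0$ for an $n\ge2$, and since the children land in $C$, where the survival probability is positive, the same holds for the no-death law, $\rho^{\,\mathrm{nd}}_{z^\ast}(n)>0$ for some $n\ge2$. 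Together with $\rho^{\,\mathrm{nd}}_{z^\ast}(0)=0$ this makes $h_{z^\ast}$ a power series with $h_{z^\ast}(0)=0$ possessing a term of degree $\ge2$, so $h_{z^\ast}(t)<t$ for all $t\in(0,1)$, contradicting $h_{z^\ast}(s)\ge s$ unless $s=0$. Hence $s=0$, so $\widehat{{\mathbf z}_Y}\equiv\mathbf 0$, ${\mathbf z}_Y=\bar{\mathbf q}_Y$, and finally ${\mathbf z}(x)\le{\mathbf z}_Y(g(x))=\bar{\mathbf q}(x)$; combined with ${\mathbf z}\ge\bar{\mathbf q}$ this yields ${\mathbf z}=\bar{\mathbf q}$, proving (1). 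The main obstacle is exactly this last step: converting the constant value on a final class into genuine branching and invoking Assumption~\ref{assump:1} through Theorem~\ref{th:discretesurv}, the reduction to a finite $Y$ and the maximum principle being comparatively routine.
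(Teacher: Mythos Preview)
Your argument is correct and shares the paper's architecture up to the point where the convexity argument is invoked: you push ${\mathbf z}$ down to $Y$ via fibre suprema, obtain $G_Y({\mathbf z}_Y)\ge{\mathbf z}_Y$, apply the maximum principle (Proposition~\ref{pro:maximumprinciple}) on the finite set $Y$, and land on a final irreducible class $C$ on which $\widehat{{\mathbf z}_Y}$ equals its global maximum $s$. This is precisely what the paper does (with $\mathbf h$ in place of your ${\mathbf z}_Y$ and $1/t(y)$ in place of $\widehat{{\mathbf z}_Y}$). The derivation of (2) from (1) is also the same.

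The endgames differ. The paper works directly with $G_Y$: it invokes Lemma~\ref{lem:generationn} to find an $\bar n$ at which two particles return to a surviving vertex $y_0$ wpp, then Lemma~\ref{lem:convexity} to make $\phi(t)=G_Y^{(\bar n)}(\bar{\mathbf q}^Y+t(\mathbf h-\bar{\mathbf q}^Y)\,|\,y_0)-\bar{\mathbf q}^Y(y_0)-t(\mathbf h(y_0)-\bar{\mathbf q}^Y(y_0))$ strictly convex, and stretches the segment past $t=1$ to $t(y_0)$ to force $\mathbf h(y_0)=1$. You instead pass to the no-death BRW $\widehat G_Y$ (legitimate since you established $\bar{\mathbf q}_Y<\mathbf 1$), evaluate on the constant vector $s\mathbf 1$ to reduce the inequality on $C$ to the one-dimensional relation $h_z(s)\ge s$ for the offspring PGF of the no-death process, and then use the elementary fact that a strictly convex PGF with $h(0)=0$, $h(1)=1$ lies strictly below the diagonal on $(0,1)$. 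Your route is more economical---it bypasses Lemmas~\ref{lem:convexity} and~\ref{lem:generationn} and the $\bar n$-step iterate---at the price of the no-death detour and the Perron--Frobenius argument to locate $z^\ast$; the paper's route stays with the original BRW and is perhaps more robust to variations where the no-death construction is unavailable. Both arguments ultimately rely on Assumption~\ref{assump:1} holding for $(Y,\nu)$, which the paper also uses without further comment.
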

 It is worth noting that, unlike the 
branching process, for a generic irreducible $\mathcal F$-BRW, when $\bar {\mathbf{q}} < \mathbf{1}$,
there might be other fixed points for $G$ (see Examples~\ref{ex:nonstronglocalFBRW}, \ref{ex:nonstronglocalFBRW2} and
Remark~\ref{rem:Spataru}). Nevertheless this cannot happen when $X$ is finite.

\begin{cor}\label{cor:finiteBRW}
If $X$ is finite and the BRW is irreducible then
there are at most two solutions of  $G(\mathbf{z}) \ge \mathbf{z}$ when $\mathbf{z} \ge \bar {\mathbf{q}}$, that is, $\bar {\mathbf{q}}$ and $\mathbf{1}$.
\end{cor}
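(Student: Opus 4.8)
The plan is to combine the maximum principle (Proposition~\ref{pro:maximumprinciple}) with a convexity argument along the segment from $\bar{\mathbf q}$ to $\mathbf 1$, and to close with the Perron--Frobenius theorem. First I would dispose of the trivial case $\bar{\mathbf q}=\mathbf 1$: then the only $\mathbf z\ge\bar{\mathbf q}$ is $\mathbf 1$, so there is exactly one solution. By irreducibility (recall that $x\to x'$ and ${\mathbf q}(x',X)<1$ force ${\mathbf q}(x,X)<1$), the remaining case is $\bar{\mathbf q}(x)<1$ for every $x$. Fix a solution $\mathbf z\ge\bar{\mathbf q}$ of $G(\mathbf z)\ge\mathbf z$ and set $\widehat{\mathbf z}=(\mathbf z-\bar{\mathbf q})/(\mathbf 1-\bar{\mathbf q})\in[0,1]^X$. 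Since $X$ is finite, $\widehat{\mathbf z}$ attains its maximum at some $x^*$; applying Proposition~\ref{pro:maximumprinciple} to the set $\{y:x^*\to y\}$, which equals $X$ by irreducibility, rules out any $y$ with $\widehat{\mathbf z}(y)>\widehat{\mathbf z}(x^*)$, so $\widehat{\mathbf z}\equiv c$ is constant and $\mathbf z=\bar{\mathbf q}+c(\mathbf 1-\bar{\mathbf q})$ for some $c\in[0,1]$. The values $c=0$ and $c=1$ recover exactly $\bar{\mathbf q}$ and $\mathbf 1$, so it remains to derive a contradiction from a solution with $0<c<1$.

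For this, write $\mathbf v:=\mathbf 1-\bar{\mathbf q}>\mathbf 0$ and consider, for each $x$, the scalar function $g_x(t):=G(\bar{\mathbf q}+t\mathbf v\mid x)$ on $[0,1]$. Expanding $G$ coordinatewise and using that each $f\in S_X$ has finite support, $g_x(t)=\sum_{f}\mu_x(f)\prod_y(\bar{\mathbf q}(y)+t\mathbf v(y))^{f(y)}$ is a power series in $t$ with nonnegative coefficients, with $g_x(0)=\bar{\mathbf q}(x)$ and $g_x(1)=1$; in particular it is convex, and its chord over $[0,1]$ is precisely $\ell_x(t)=\bar{\mathbf q}(x)+t(1-\bar{\mathbf q}(x))$, which is the $x$-coordinate of $\bar{\mathbf q}+t\mathbf v$. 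Hence the inequality $G(\mathbf z)\ge\mathbf z$ reads $g_x(c)\ge\ell_x(c)$ for all $x$, while convexity forces $g_x(c)\le\ell_x(c)$; thus $g_x(c)=\ell_x(c)$ at the interior point $c$, so $g_x\equiv\ell_x$ is affine for every $x$.

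Since $\mathbf v>\mathbf 0$, each monomial $\prod_y(\bar{\mathbf q}(y)+t\mathbf v(y))^{f(y)}$ has degree exactly $\mathcal H(f)=\sum_y f(y)$ with strictly positive leading coefficient, so the vanishing of the coefficients of $t^k$ for $k\ge 2$ in $g_x$ forces $\mu_x(f)=0$ whenever $\mathcal H(f)\ge 2$. Thus every $\mu_x$ is supported on $\{f:\mathcal H(f)\le 1\}$ and $G$ is affine: $G(\mathbf z\mid x)=\rho_x(0)+\sum_y m_{xy}\mathbf z(y)$ with row sums $\sum_y m_{xy}=1-\rho_x(0)\le 1$. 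By Assumption~\ref{assump:1} and irreducibility there is a vertex $y$ with $\mu_y(\mathcal H=1)<1$; together with $\mu_y(\mathcal H\le 1)=1$ this gives $\rho_y(0)>0$, so row $y$ of $M$ sums to strictly less than $1$.

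Finally I would invoke Perron--Frobenius. As $M$ is a finite, irreducible, nonnegative matrix whose row sums are all $\le 1$ with at least one strictly $<1$, the standard row-sum estimate gives that its Perron--Frobenius eigenvalue $M_s(x,x)$ is strictly less than $1$; hence $\mathbf I-M$ is invertible. Subtracting the fixed-point identities $\bar{\mathbf q}=G(\bar{\mathbf q})$ and $\mathbf 1=G(\mathbf 1)$ in their affine form yields $(\mathbf I-M)(\mathbf 1-\bar{\mathbf q})=\mathbf 0$, whence $\bar{\mathbf q}=\mathbf 1$, contradicting $\bar{\mathbf q}<\mathbf 1$. Therefore no solution with $0<c<1$ exists, and $\bar{\mathbf q}$ and $\mathbf 1$ are the only solutions. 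The main obstacle is the middle step: showing that a single touching of the convex $g_x$ to its chord propagates, via the nonnegativity of the power-series coefficients, all the way to the full degeneracy $\mu_x(\mathcal H\ge 2)=0$, and then checking that it is exactly this degeneracy, combined with Assumption~\ref{assump:1}, that Perron--Frobenius turns into global extinction.
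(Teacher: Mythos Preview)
Your proof is correct, and it takes a somewhat different route from the paper's. Both arguments start by disposing of $\bar{\mathbf q}=\mathbf 1$ and then use the maximum principle together with a one-dimensional convexity argument, but they deploy these ingredients in opposite orders and close with different tools. The paper derives the corollary directly from Theorem~\ref{th:Fgraph}(1): since a finite $X$ is trivially an $\mathcal F$-BRW, that theorem (whose proof uses the iteration $G^{(\bar n)}$ and the strict-convexity machinery of Lemmas~\ref{lem:convexity} and~\ref{lem:generationn}) forces any $\mathbf z\neq\bar{\mathbf q}$ with $G(\mathbf z)\ge\mathbf z$ to satisfy $\sup_x\mathbf z(x)=1$, hence $\mathbf z(x)=1$ at some vertex; then Proposition~\ref{pro:maximumprinciple} and irreducibility propagate this to $\mathbf z=\mathbf 1$. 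You instead apply the maximum principle first, reducing immediately to $\mathbf z=\bar{\mathbf q}+c(\mathbf 1-\bar{\mathbf q})$, and then run a direct chord-touching argument on the scalar functions $g_x(t)=G(\bar{\mathbf q}+t(\mathbf 1-\bar{\mathbf q})\mid x)$: equality at an interior $c$ forces each $g_x$ affine, hence $\mu_x(\mathcal H\ge2)=0$ for every $x$, and you finish by showing via Perron--Frobenius (with Assumption~\ref{assump:1} supplying a substochastic row) that this degenerate situation is incompatible with $\bar{\mathbf q}<\mathbf 1$. Your argument is more self-contained---it avoids iterating to $G^{(\bar n)}$ and the two auxiliary lemmas---at the price of invoking Perron--Frobenius; the paper's route, by contrast, subsumes the result into the general $\mathcal F$-BRW framework of Theorem~\ref{th:Fgraph}, which is what one wants when $X$ is infinite.
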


Using Theorem~\ref{th:Fgraph} we can describe the case when $X$ is finite
(not necessarily irreducible).
Clearly in this case $\bar {\mathbf{q}}(w)={\mathbf{q}}(w, A_w)$ where $A_w:=\{x \in X \colon w \to x\}$.
Moreover, 
for all $x \in X$ we have that
it is either ${\mathbf{q}}(\cdot, x)=\bar {\mathbf{q}}(\cdot)$ or there exists $w \in X$ such that ${\mathbf{q}}(w,x)=1$. If the BRW is
irreducible (and $X$ is finite) then it is 
$\bar {\mathbf{q}}(w)={\mathbf{q}}(w,w)$ for all $w \in X$ or ${\mathbf{q}}(w,x)=1$ for all $w,x \in X$.

\begin{cor}\label{pro:qtransitive}
 Let $(X, \mu)$ be an irreducible and quasi-transitive BRW.
Then the existence of $x \in X$ such that there is local survival at $x$ (i.e.~${\mathbf{q}}(x,x) < 1$)
implies that there is strong local survival at $y$ starting from $w$ for
every $w,y \in X$ (i.e~${\mathbf{q}}(w,y)=\bar {\mathbf{q}}(w)$).
\end{cor}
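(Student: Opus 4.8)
The plan is to reduce the statement to the diagonal equality $\mathbf{q}(x,x) = \bar{\mathbf{q}}(x)$ for every $x$, and then to extract that from the dichotomy in Theorem~\ref{th:Fgraph}(2), using quasi-transitivity to discard the unwanted alternative. First note that a quasi-transitive BRW is an $\mathcal{F}$-BRW, so Theorem~\ref{th:Fgraph} applies. The off-diagonal part of the conclusion is then essentially free: in the irreducible case any two vertices satisfy $y \rightleftharpoons w$, so by the relation $\mathbf{q}(w,y) = \mathbf{q}(w,y')$ whenever $y \rightleftharpoons y'$ (recalled in Section~\ref{subsec:survivalprob}) we get $\mathbf{q}(w,y) = \mathbf{q}(w,w)$ for all $w,y$. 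Hence it suffices to prove $\mathbf{q}(x,x) = \bar{\mathbf{q}}(x)$ for all $x$.

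Second, I would exploit $\gamma$-invariance to show that the diagonal $x \mapsto \mathbf{q}(x,x)$ takes only finitely many values. If $\gamma$ is a bijective invariance map, then the $\gamma$-relabelled process started at $\gamma(w)$ has the same law as the $\gamma$-image of the process started at $w$ (equivalently, from the identity $G({\mathbf{z}}|\gamma(x)) = G({\mathbf{z}}\circ\gamma|x)$, which follows from $\gamma$-invariance, one reads off the same identity for the extinction probabilities); consequently $\mathbf{q}(\gamma(w),\gamma(y)) = \mathbf{q}(w,y)$ for all $w,y$. Taking $y = w$ shows that $x \mapsto \mathbf{q}(x,x)$ is constant along every orbit. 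By quasi-transitivity there is a finite $X_0$ such that every $x$ equals $\gamma(x_0)$ for some $x_0 \in X_0$ and some bijective invariance $\gamma$, so $\mathbf{q}(x,x) = \mathbf{q}(x_0,x_0)$ and the diagonal assumes at most $|X_0|$ distinct values.

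Third, I would combine this with the hypothesis. Local survival at one $x$ means $M_s(x,x) > 1$ by Theorem~\ref{th:discretesurv}(1); since the BRW is irreducible, $M_s(z,z) = M_s > 1$ for every $z$, so $\mathbf{q}(z,z) < 1$ for all $z$. Because the diagonal takes finitely many values, all strictly below $1$, we obtain $\sup_z \mathbf{q}(z,z) = \max_{x_0 \in X_0}\mathbf{q}(x_0,x_0) < 1$. Now I invoke the irreducible form of Theorem~\ref{th:Fgraph}(2): either $\mathbf{q}(x,x) = \bar{\mathbf{q}}(x)$ for all $x$, or $\sup_x \mathbf{q}(x,x) = 1$. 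The second alternative is excluded by the strict bound just obtained, so $\mathbf{q}(x,x) = \bar{\mathbf{q}}(x)$ for all $x$, and together with the first paragraph this yields $\mathbf{q}(w,y) = \bar{\mathbf{q}}(w)$ for all $w,y$.

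The main obstacle is precisely the elimination of the alternative $\sup_x \mathbf{q}(x,x) = 1$ in the dichotomy: for a general $\mathcal{F}$-BRW the diagonal need not be uniformly bounded away from $1$, and strong local survival may genuinely fail (cf.\ the counterexamples announced in the introduction). Quasi-transitivity is exactly what forces the diagonal to take finitely many values, hence to have supremum strictly below $1$ once local survival holds; this is the one place where transitivity, rather than the weaker $\mathcal{F}$-BRW property, is indispensable. I would also double-check that the passage from $\gamma$-invariance of $\mu$ to the equivariance $\mathbf{q}(\gamma(w),\gamma(y)) = \mathbf{q}(w,y)$ is justified by the bijectivity of $\gamma$ (so that $\gamma$ acts as a genuine automorphism of the labelled dynamics), which is guaranteed in the quasi-transitive case.
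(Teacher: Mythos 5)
Your proposal is correct and follows essentially the same route as the paper's proof: reduce to the diagonal via irreducibility, use quasi-transitivity (through $\gamma$-invariance and the finite orbit set $X_0$) to get $\sup_x \mathbf{q}(x,x)<1$, and then rule out the second alternative in the dichotomy of Theorem~\ref{th:Fgraph}(2). The only difference is that the paper states the quasi-transitivity step (``${\mathbf{q}}(\cdot,y)<\mathbf 1$ implies $\sup_x {\mathbf{q}}(x,y)<1$'') without proof, whereas you spell out the underlying equivariance $\mathbf{q}(\gamma(w),\gamma(y))=\mathbf{q}(w,y)$ explicitly.
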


Hence for a quasi-transitive, irreducible BRW, whenever there is local
survival, it is a strong local survival; in continuous-time this implies that
there is global and local extinction if $\lambda \in [0, \lambda_w]$,
pure global survival if $\lambda \in (\lambda_w, \lambda_s]$ and
strong local survival if $\lambda \in (\lambda_s, +\infty)$
(see also Theorem~\ref{th:continuoussurv}).

In the particular case of a quasi-transitive, irreducible BRW with no death
and with independent diffusion, 
Corollary~\ref{pro:qtransitive} was proved in~\cite[Theorem 3.7]{cf:Muller08-2}.
The proof we give in Section~\ref{sec:proofs} is of a different nature.
Unlike Theorem~\ref{th:Fgraph}, Corollary~\ref{pro:qtransitive} does not hold for every
$\mathcal F$-BRW; indeed, as Examples~\ref{ex:nonstronglocalFBRW} and \ref{ex:nonstronglocalFBRW2}
show, for an irreducible $\mathcal F$-BRW there might be non-strong local survival.

The following result follows by applying \cite[Theorem 3.1]{cf:MenshikovVolkov} to
the no-death BRW associated to a generic BRW as described in
Section~\ref{subsec:genfun} (hence we omit the proof).
The original result \cite[Theorem 3.1]{cf:MenshikovVolkov} can be recovered from this one by assuming
$\rho_x(0)=0$ for all $x \in X$ which implies that $\bar {\mathbf{q}}={\mathbf{0}}$ and  
$T^{-1}_{\bar {\mathbf{q}}}$ is equal to the identity.

\begin{teo}
\label{th:MenshikovVolkov} 
Let $(X,\mu)$ be an irreducible, globally surviving BRW. 
Then there is no strong local survival
if and only if  there exists a finite, nonempty set $A \subseteq X$ and a function ${\mathbf{v}} \in [0,1]^X$ such that
$\bar {\mathbf{q}} \le {\mathbf{v}}$ and
\[ 
 \begin{cases}
  G({\mathbf{v}}|x) \ge {\mathbf{v}}(x), & \forall x \in A^\complement,\\
  (T^{-1}_{\bar {\mathbf{q}}}{\mathbf{v}})(x_0) > \max_{x \in A} (T^{-1}_{\bar {\mathbf{q}}}{\mathbf{v}})(x) & \textrm{for some } x_0 
  \in A^\complement,
 \end{cases}
\] 
where $T^{-1}_{\bar {\mathbf{q}}}{\mathbf{v}}=({\mathbf{v}}-\bar {\mathbf{q}})/(\mathbf{1}-\bar {\mathbf{q}})$.
\end{teo}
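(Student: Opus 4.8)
The plan is to reduce the statement to \cite[Theorem 3.1]{cf:MenshikovVolkov} by passing to the no-death BRW $\{\widehat\eta_n\}_{n\in\N}$ associated to $(X,\mu)$, as constructed in Section~\ref{subsec:genfun}. Since $(X,\mu)$ is irreducible and globally surviving, I first record that $\bar{\mathbf q}(x)<1$ for every $x\in X$: global survival from one vertex forces $\bar{\mathbf q}(x)<1$ there, and irreducibility propagates this to all vertices (if $x\rightleftharpoons x'$ then ${\mathbf q}(x,X)<1$ iff ${\mathbf q}(x',X)<1$). Consequently $T_{\bar{\mathbf q}}$ is a bijection from $[0,1]^X$ onto $\{{\mathbf z}\in[0,1]^X:\bar{\mathbf q}\le{\mathbf z}\}$, and the identity $\widehat G=T^{-1}_{\bar{\mathbf q}}\circ G\circ T_{\bar{\mathbf q}}$ from equation~\eqref{eq:G-one} is available.

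Next I would set up the two dictionaries that carry hypotheses and conclusions between the two processes. On the one hand, by Theorem~\ref{rem:strongconditioned} the original BRW has no strong local survival precisely when there is a finite nonempty $A$ with ${\mathbf q}(x,A)>\bar{\mathbf q}(x)$ for some $x$; by the local-survival correspondence of Section~\ref{subsec:genfun}, the local extinction probability of $\{\widehat\eta_n\}$ in $A$ is $T^{-1}_{\bar{\mathbf q}}{\mathbf q}(\cdot,A)$, so this is exactly the assertion that the no-death BRW fails to survive locally in $A$ almost surely, that is, that it has no strong local survival (for a no-death process $\bar{\mathbf q}=\mathbf 0$, and strong local survival means certain local survival). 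On the other hand, for ${\mathbf v}$ with $\bar{\mathbf q}\le{\mathbf v}$ put $\widehat{\mathbf v}:=T^{-1}_{\bar{\mathbf q}}{\mathbf v}\in[0,1]^X$. Since $T^{-1}_{\bar{\mathbf q}}$ acts coordinatewise and, on each coordinate with $\bar{\mathbf q}(x)<1$, is strictly increasing with positive slope, and since $\widehat G(\widehat{\mathbf v})=T^{-1}_{\bar{\mathbf q}}(G({\mathbf v}))$, the inequality $\widehat G(\widehat{\mathbf v}|x)\ge\widehat{\mathbf v}(x)$ is equivalent to $G({\mathbf v}|x)\ge{\mathbf v}(x)$, while $\widehat{\mathbf v}(x_0)>\max_{x\in A}\widehat{\mathbf v}(x)$ is literally the second displayed condition.

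With these dictionaries in place I would apply \cite[Theorem 3.1]{cf:MenshikovVolkov} to $\{\widehat\eta_n\}$: it states that an irreducible no-death BRW has no strong local survival if and only if there exist a finite nonempty $A$ and $\widehat{\mathbf v}\in[0,1]^X$ with $\widehat G(\widehat{\mathbf v}|x)\ge\widehat{\mathbf v}(x)$ on $A^\complement$ and $\widehat{\mathbf v}(x_0)>\max_{x\in A}\widehat{\mathbf v}(x)$ for some $x_0\in A^\complement$. Translating the left-hand side by the first dictionary and, via the bijection ${\mathbf v}\mapsto T^{-1}_{\bar{\mathbf q}}{\mathbf v}$, the right-hand side by the second, yields precisely the claimed equivalence; the constraint $\bar{\mathbf q}\le{\mathbf v}$ is exactly the image of $[0,1]^X$ under $T_{\bar{\mathbf q}}$.

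The main obstacle is to justify that \cite[Theorem 3.1]{cf:MenshikovVolkov} applies to $\{\widehat\eta_n\}$, i.e.\ that the no-death BRW is itself irreducible (and nondegenerate in the required sense). This holds because a particle at $x$ with an infinite line of descent can, with positive probability, send a descendant with an infinite line of descent to any $y$ reachable from $x$ in $(X,\mu)$ — here one uses $\bar{\mathbf q}(y)<1$ for all $y$, established above — so irreducibility of $(X,\mu)$ passes to $\{\widehat\eta_n\}$. The remaining verifications, namely that $T^{-1}_{\bar{\mathbf q}}$ genuinely respects both inequalities and that the absence of strong local survival for the no-death process coincides with the $A$-visiting formulation of Theorem~\ref{rem:strongconditioned}, are routine once the correspondence $\widehat G=T^{-1}_{\bar{\mathbf q}}\circ G\circ T_{\bar{\mathbf q}}$ is in hand.
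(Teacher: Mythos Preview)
Your proposal is correct and follows exactly the approach the paper indicates: the authors explicitly state that the theorem ``follows by applying \cite[Theorem 3.1]{cf:MenshikovVolkov} to the no-death BRW associated to a generic BRW as described in Section~\ref{subsec:genfun}'' and omit the proof. Your write-up simply fills in the conjugation $\widehat G=T^{-1}_{\bar{\mathbf q}}\circ G\circ T_{\bar{\mathbf q}}$ and the irreducibility check that the paper leaves implicit.
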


\subsection{Pure global survival}
\label{subsec:stronglocal}

%
%
The idea of \textit{pure global survival} (see Definition~\ref{def:survival}(4)) 
has been first introduced in continuous-time
BRW theory (and, more generally, in interacting
particle theory) to define the situation where $\lambda_s(x)>\lambda_w(x)$. In this case
for every $\lambda \in (\lambda_w(x), \lambda_s(x)]$ there is a positive probability
of global survival starting from $x$ but the colony dies out locally at $x$ almost surely.
A necessary condition for the existence of a pure global survival phase starting from $x$
is $K_s(x,x)<K_w(x)$ (see Theorem~\ref{th:continuoussurv}).
According to Theorem~\ref{th:continuoussurv}(3), for an $\mathcal{F}$-BRW this condition is also sufficient.

Clearly for an irreducible, continuous-time BRW, the existence of pure global survival does not depend
 on the starting vertex since
$\lambda_w(x)=\lambda_w$ and $\lambda_s(x)=\lambda_s$ for all $x \in X$.
This is still true for an irreducible discrete-time BRW as a consequence of
Proposition~\ref{pro:maximumprinciple}. Indeed, if ${\mathbf{z}}(\cdot):={\mathbf{q}}(\cdot,A)$
we have that $\widehat {\mathbf{z}}(x)$ can be interpreted as the probability of
local extinction in $A$ conditioned on global survival (starting from $x$).
Thus, according to Proposition~\ref{pro:maximumprinciple}, 
if the BRW is irreducible, then this conditional probability is one everywhere, provided it
is one somewhere. This means that if there is pure global survival starting from some $x$
then there is pure global survival starting from every $x$. 

Theorem~\ref{th:continuoussurv}(4) tells us that
an irreducible, continuous-time $\mathcal{F}$-BRW has a pure global survival phase 
if and only if it is nonamenable. This is not true if the process is not an $\mathcal{F}$-BRW
as shown by Example~\ref{exm:amenable}.
The same example shows that pure global survival is a fragile property of a BRW. Indeed, finite modifications,
such as for an edge-breeding BRW attaching a complete finite graph to a vertex
or removing a set of
vertices and/or edges,
 can create it or destroy it.

\section{Examples}\label{sec:examples}

 The first example shows that there are irreducible amenable BRWs with pure global survival and
irreducible nonamenable BRWs with no
pure global survival (see also \cite{cf:PemStac1}).

\begin{exmp}\label{exm:amenable}
In this example we use many times the following argument (which is an adaptation from \cite[Remark 3.2]{cf:BZ}).
Consider a continuous-time BRW adapted to a connected graph $X$, in the sense that
 $k_{xy}>0$ if and only if $(x,y)$ is an edge.
In some cases it is easy to show that  the existence of a pure global survival of a BRW
implies the existence of a pure global survival of the BRW restricted to some subgraph
(where all the rates $k_{xy}$ are turned to $0$ if $x$ or $y$ do not belong to the subgraph). 
Indeed
if $Y$ is a finite subset of $X$ such that $X \setminus Y$ is divided into
a finite number of connected graphs $X_1, \ldots, X_n$ (which is certainly true if
$k_{xy}>0$ is equivalent to $k_{yx} >0$ for all $x,y \in X \setminus Y$), then for every
$\lambda \in (\lambda_w^{X},\lambda_s^{X})$ the BRW on $X$ leaves eventually a.s.~the subset $Y$.
Hence it survives (globally but not locally) at least on one connected component; this means that,
although $\lambda_s^{X_i} \ge \lambda_s^{X}$, $\lambda_w^{X_i} \ge \lambda_w^{X}$ for all $i=1, \ldots,n$
(since $X_i \subseteq X$),
there exists $i_0$ such that $\lambda_w^{X_{i_0}} = \lambda_w^{X}$. The existence of a pure
global survival on $X_{i_0}$ follows from
$\lambda_s^{X_{i_0}} \geq \lambda_s^{X}>\lambda_w^{X}=\lambda_w^{X_{i_0}}$.
Hence if there exists a subset $Y$ as above such that $\lambda_w^{X_i} > \lambda_w^{X}$ for all $i$,
then there is no pure global survival for the BRW on $X$.
%

Consider an irreducible, edge-breeding continuous-time BRW on the (non-oriented) graph $X$ obtained by attaching to a copy of
 $\N$ one branch $T$ of the homogeneous tree $\mathbb T_3$. 
 The BRW is amenable by the presence of the copy of $\N$.
We claim that $\lambda_s^X=\lambda_s^{\mathbb T_3}$ and $\lambda_w^X = \lambda_w^{\mathbb T_3}$.
Indeed $T\subset X\subset \mathbb T_3$, hence
 $\lambda_s^T\ge \lambda_s^X\ge\lambda_s^{\mathbb T_3}$ and  $\lambda_w^T\ge
 \lambda_w^X \ge \lambda_w^{\mathbb T_3}$.  But by approximation, $\lambda_s^T=\lambda_s^{\mathbb T_3}$.
 Indeed $\lambda_s^T\ge \lambda_s^{\mathbb T_3}$ and does not depend on the starting vertex; moreover
 $T$ contains arbitrarily large balls isomorphic to balls of $\mathbb T_3$, hence by 
\cite[Theorem 5.2]{cf:Z1}\footnote{In \cite[Section 5.1]{cf:Z1}
the hypotheses that $M$ is a nonnegative matrix is missing, even though it is implicitly used.
Moreover \cite[Theorems 5.1 and 5.2]{cf:Z1} hold without the irreducibility hypothesis: the key is to note that,
given a sequence $\{X_n\}_{n \in \N}$ of subsets of $X$ such that $\liminf_n X_n=X$ and defined the sequence of matrices
$\{_nM\}_{n \in \N}$ as $_nM:=(m_{x,y})_{x,y \in X_n}$, one can
prove that, for all $x_0 \in X$, we have $(_nM)_s(x_0,x_0) \rightarrow M_s(x_0,x_0)$ (as defined
in equation~\eqref{eq:generalgeomparam}).}
or \cite[Theorem 3.1]{cf:BZ3}
 their critical local parameters coincide.
Explicit computations show that $\lambda_w^{\mathbb T_3}=1/3<1/2\sqrt{2}=\lambda_s^{\mathbb T_3}$ (there is pure global survival
on $\T_3$).
 Since $\T_3$ can be obtained by attaching three copies of $T$ to a root, the above discussion about surviving on a subgraph, implies
that $\lambda_w^T=\lambda_w^{\mathbb T_3}$.
Then we have
 $\lambda_w^X = \lambda_w^T \le \lambda_w^{\mathbb T_3}<\lambda_s^{\mathbb T_3}=\lambda_s^X$ and there is pure global survival on $X$.
%

On the other hand, consider a nonamenable graph $X^\prime$ such that the corresponding edge-breeding continuous-time
BRW has a pure global survival phase (take for instance $X^\prime:=\mathbb{T}_3$ the homogeneous tree
with degree $3$). 
Attach
to a vertex of $X^\prime$ a complete graph $Y$ with degree $k>1/\lambda_w^{X^\prime}$ by an edge.
It is easy to show that the resulting
graph $X$ is still nonamenable, nevertheless 
there
is no pure global survival for the corresponding edge-breeding BRW. Indeed, by the above discussion, if there were
pure global survival on $X$ then one of the connected components of $X \setminus Y$ should have the same global critical value
of $X$; but $X\setminus Y=X^\prime$ and $\lambda_w^X \le 1/k<\lambda_w^{X^\prime}$.
Roughly speaking, 
it happens that for every
$\lambda \in (\lambda_w^X,\lambda_w^{X^\prime})$ the process cannot survive globally in
$X^\prime$ 
hence it hits infinitely often with positive probability the complete
graph, thus $\lambda_s^X=\lambda_w^X$.
%
\end{exmp}

The following example shows that the strong local survival is not monotone.
The counterexample is obtained by modifying the edge-breeding
BRW on a particular graph, namely the homogeneous
tree $\mathbb{T}_d$. The crucial property that we need here is the existence of a pure global survival
phase, thus the procedure applies to every BRW with such a phase.

\begin{exmp}\label{rem:nonstrongandstrong}
Consider the edge-breeding continuous-time BRW on the homogeneous
tree $\mathbb{T}_d$ with degree $d\ge 3$. 
Since the graph has constant degree $d$, the BRW can be seen also as a site-breeding process
where $k(x)=d$ for all $x \in \mathbb{T}_d$.
Hence it is locally isomorphic to a branching process which implies that $\lambda_w(x)=1/K_w(x)=1/d$ for all $x \in \mathbb{T}_d$
and if $\lambda \le 1/d$ then the probabilities of survival are $0$ (see Theorem~\ref{th:continuoussurv}(3)).
Similarly, according to Theorem~\ref{th:continuoussurv}(1), $\lambda_s(x)=1/K_s(x,x)$ which does not depend on $x$. 
By the definition of $P$ and the discussion after equation~\eqref{eq:counterpart}, 
we have that $K_s(x,x) = d \cdot \limsup_{n \to \infty} \sqrt[n]{p^{(n)}(x,y)}$
where $P$ is the diffusion matrix of the simple random walk on $\mathbb{T}_d$. 
Using \cite[Lemma 1.24]{cf:Woess}, we obtain $K_s(x,x)=2 \sqrt{d-1}$ which implies $\lambda_s(x)=1/2\sqrt{d-1}$ for all 
$x \in \mathbb{T}_d$ (and there is global extinction when $\lambda=\lambda_w$).
Hence, 
if $\lambda > 1/2\sqrt{d-1}$ there is strong local survival (see Corollary~\ref{pro:qtransitive})
while 
if $\lambda \in (1/d ,1/2 \sqrt{d-1}]$ the probability
of global survival is positive and
independent of the starting point and the probability of local survival at any finite $A \subseteq X$
is $0$.

Fix $\lambda \in (1/d, 1/2 \sqrt{d-1}]$ and a finite  $A \subseteq X$. According to Theorem~\ref{rem:strongconditioned},
there exists  $x \in X$  such that
there is a positive probability of global survival starting from $x$
without ever visiting $A$ (clearly $x\not\in A$). In this case, any modification of the rates in the subset $A$ provides 
a new BRW such that 
there is still a positive probability of global survival starting from $x$
without ever visiting $A$ (since, the original BRW and the new one coincide until the first hitting time on $A$).
On the other hand, if there is $y \in A$ such that $x \to y$ and we add a loop in $y$
and a rate $k_{yy}>1/\lambda$ then $\bar {\mathbf{q}}(x)<{\mathbf{q}}(x,y)<1$;
the first inequality holds by the above discussion on local modifications
and the second one holds since $\lambda k_{yy}>1$ implies local survival at $y$
(then irreducibility implies local survival at $y$ starting from $x$).
This means that, for this fixed value of $\lambda$, we obtained a locally and globally
(but not strong-locally) surviving BRW at $y$ starting from $x$.

Suppose now that $k_{yy}>d$;
then, as in Example~\ref{exm:amenable},
we have a new BRW such that $\lambda_w^\prime=\lambda_s^\prime \le 1/k_{yy}$. In this case, when $\lambda \le \lambda_w^\prime$
there is global extinction.
When $\lambda > 1/2\sqrt{d-1}$ there is  strong local survival for the new BRW since 
there is strong local survival for the original one
(the probability of hitting $x$ conditioned on global survival is $1$ for both
processes and Theorem~\ref{rem:strongconditioned} applies).
If $\lambda \in (\lambda_w^\prime, 1/d]$ there is local and global survival
with the same probability since in order to survive globally,
the process must visit $x$ infinitely many times (it cannot survive globally in the branches of $\mathbb{T}_d$).
If $\lambda \in (1/d, 1/2\sqrt{d-1}]$ then $k_{yy}>1/\lambda$ and, according to the previous discussion,
there is non-strong local survival for the new BRW.
\end{exmp}

We show that even in the irreducible case, if $\rho_x(0)=0$ for some $x \in X$, we might have
strong local survival starting from some vertices and not from others.

\begin{exmp}\label{exm:irreduciblenotstrongeverywhere}
 Let us consider a modification of the discrete-time counterpart of the edge-breeding BRW on
$\mathbb{T}_d$ with degree $d\ge 3$ and $\lambda \in (1/d, 1/2\sqrt{d-1}]$ . Let us fix
a vertex $y$; in this modified version we add, with probability one,
one child at $y$ for every particle at $y$. In this case $\bar {\mathbf{q}}(y)={\mathbf{q}}(y,A)=0$ for all $A \subseteq X$.
On the other hand as  in Example~\ref{rem:nonstrongandstrong},
there is a vertex $x$ such that $\bar {\mathbf{q}}(x)<{\mathbf{q}}(x,y)$.
\end{exmp}
\smallskip

In the last few examples we make use of the subclass of BRWs which are
locally isomorphic to a branching process (which are particular
$\mathcal F$-BRWs, see Section~\ref{subsec:FBRWs}). 
By using Theorems~\ref{th:discretesurv} and \ref{th:continuoussurv} and the explicit
computations for $M_s$ and $M_w$ given after equation~\eqref{eq:generalgeomparam},
it is easy to show that for such a process: (1) there is global survival if and only if $\bar \rho>1$;
(2) there is local survival at $x$
if and only if $\bar \rho > 1/\limsup_{n \to \infty} \sqrt[n]{p^{(n)}(x,x)}=:r(x,x)$.
Hence, 
in the irreducible case, there is pure global survival if and only if
$1< \bar \rho \le r$ (where $r=r(x,x)$ in this case does not depend on $x \in X$ due to irreducibility).
This is possible if and only if $r>1$ which is equivalent to nonamenability 
since in this case
$
M_s(x,y)= \bar \rho / r$ and
$M_w(x) 
=\bar \rho$.
%
It is clear that, given a
continuous-time BRW which is locally isomorphic to a branching process,
$\lambda_w=1/k$ and $\lambda_s(x)=r(x,x)/k$ (where $k=k(x)$ for all $x \in X$).

In general there may be non-strong local survival, even if the BRW is irreducible, 
locally isomorphic to a branching process
and it has independent diffusion 
as Examples~\ref{ex:nonstronglocalFBRW} and \ref{ex:nonstronglocalFBRW2} show.
This disproves \cite[Theorem 3 and Corollary 4]{cf:Spataru89} (see also Remark~\ref{rem:Spataru}) since
$\bar{\mathbf{q}} < \mathbf{q}(\cdot,0) < \mathbf{1}$ are three distinct fixed points of $G$.
%

\begin{exmp}\label{ex:nonstronglocalFBRW}
Fix $X:=\N$ and consider a BRW with the following reproduction probabilities.
Every particle has two children with probability $3/4$ and no children with probability $1/4$. Each newborn
particle is dispersed independently according to a nearest neighbor matrix $P$ on $\N$.
More precisely
\[
 p(i,j):=
\begin{cases}
 p_i & \textrm{ if } j=i+1\\
1-p_i  & \textrm{ if } j=i-1,\\
\end{cases}
\]
and $p_0=1$.
The process described above is
an irreducible $\mathcal F$-BRW for every choice of the set $\{p_i\}_{i \in \N \setminus \{0\}}$
such that $p_i \in (0,1)$ for all $i>0$.
The generating function of the total number of children is $z \mapsto 3 z^2/4+1/4$ and its minimal
fixed point is $1/3=\bar {\mathbf{q}}(x)$ (for all $x \in \N$).

Choose $p_1 < 5/9$; it is easy to show that the process confined to $\{0,1\}$ (that is, every particle sent outside $\{0,1\}$ is
killed) survives,
since the expected number of children at $0$ every two generations (starting from $0$)
is $ (3/2)^2 (1-p_1)>1$. Since the confined process is stochastically dominated by the original one,
we have local survival, for instance, at $x=0$.
By irreducibility this implies that ${\mathbf{q}}(x,y)<1$ and $\bar {\mathbf{q}}(x)<1$ for all $x,y \in \N$.

Choose the $p_i$s such that $\prod_{i=1}^\infty p_i^{2^i}>0$ (or, equivalently, 
 $\sum_{i=1}^\infty 2^i (1-p_i)< +\infty$).
Consider the branching process $N_n$ representing the total number of particles alive at time $n$:
for all $n$,  $N_n \le 2^n$ almost surely.
The probability, conditioned on global survival, that every particle places its children (if any) to its right,
is the conditional expected value of $\prod_{i=1}^\infty p_i^{N_i}$.
But
$\prod_{i=1}^\infty p_i^{N_i} \ge \prod_{i=1}^\infty p_i^{2^i}>0$ almost surely.
 Hence, conditioning on global survival
there is a positive probability of non-local survival. This implies ${\mathbf{q}}(\cdot, y) \not = \bar {\mathbf{q}}$
for every $y \in \N$. Note that, according to Theorem~\ref{th:Fgraph}, $\sup_{x \in \N} {\mathbf{q}}(x,x)=1$.
This proves that, even in the irreducible case, the generating function $G$ can have more than two
fixed points (see also Remark~\ref{rem:Spataru}).
\end{exmp}

The key in the previous example is that the total number of particles alive at time $n$ is bounded.
This is not an essential assumption. The following example shows that, given any law $\rho$ of a
surviving
branching process (that is, $\bar \rho=\sum_{n \in \N} n \rho(n)>1$), it is possible to construct
an irreducible BRW which is locally isomorphic to a branching process with non-strong local survival.

\begin{exmp}\label{ex:nonstronglocalFBRW2}
Let $X=\N$ and $\rho_x:=\rho$ for all $x \in \N$; $\rho$ being the law of a surviving branching process.
We know that $\bar {\mathbf{q}}(x) \equiv \bar q$ for all $x \in \N$
where $\bar q<1$ is the smallest fixed point of $z \mapsto \sum_{n\ in \N} \rho(n) z^n$.
Pick a sequence of natural numbers
$\{N_i\}_{i \in \N}$ satisfying
\begin{equation} \label{eq:defineNi}
\prod_{i \in \N} \rho([0,N_{i+1}])^{\prod_{j=0}^i N_j} >\bar q,
\end{equation}
where $N_0:=1$.
Note that the probability of the event $\mathcal{A}$=``every particle alive at time $i$ has at most $N_{i+1}$ children
for all $i \in \N$'' is bounded from below by the LHS of equation~\eqref{eq:defineNi}.
Thus, from equation~\eqref{eq:defineNi}, with a probability larger than
$\prod_{i \in \N} \rho([0,N_{i+1}])^{\prod_{j=0}^i N_j} -\bar q>0$ the colony survives globally and
the total size of the population at time $n$ is not larger than $\prod_{j=0}^n N_j$
(i.e.~the intersection between $\mathcal{A}$ and global survival has positive probability).

We define a BRW with independent diffusion 
where $P$ is as follows
\[
 p(i,j)
:=
\begin{cases}
 p_i &  j=i+1,\, i \ge 0\\
1-p_i & j=i-1, \, i \ge 1\\
1-p_0 & i=j=0.\\
\end{cases}
\]
Let $p_0$ such that $(1-p_0)\bar \rho>1$; this implies local survival.
We choose the sequence $\{p_i\}_{i \in \N}$, where $p_i \in (0,1)$ in such a way that
\begin{equation} \label{eq:definepi}
\prod_{i \in \N} p_i^{\prod_{j=0}^i N_j} >0
\end{equation}
(or, equivalently, 
$\sum_{i \in \N} (1-p_i){\prod_{j=0}^i N_j}< \infty$).
Using equation~\eqref{eq:definepi}, if we condition on $\mathcal A$,
the probability that, every particle places its children (if any) to its right is bounded from
below by $\prod_{i \in \N} p_i^{\prod_{j=0}^i N_j}$.
This implies that there is a positive probability of global, non-local survival.

The choice of the sequences $\{N_i\}_{i \in \N}$ and $\{p_i\}_{i \in \N}$ satisfying equations~\eqref{eq:defineNi} and
\eqref{eq:definepi} respectively can be done as follows.
Choose a sequence $\{\alpha_i\}_{i \in \N}$ such that $\alpha_i \in (0,1)$ for all $i \in \N$ and
$\prod_{i \in \N} \alpha_i >1-\bar q$. Then, iteratively, if we fixed $N_0, \ldots, N_k$, since $\lim_{x \to \infty}\rho([0,x])=1$
there exists $N_{k+1} \in \N$ such that $\rho([0,N_{k+1}])>\alpha_{k+1}^{1/\prod_{j=0}^k N_j}$.
Let us take, for instance, $p_i > 1/(i \cdot {\prod_{j=0}^i N_j})$.

We note that the class constructed in this example includes
discrete-time counterparts of continuous-time BRWs where
$\rho$ can be chosen as in equation~\eqref{eq:counterpart} where $k(x) \equiv k$ does not depend on $x$,
$k_{xy}:=k \cdot p(x,y)$ (where $P$ is defined as before) and $\lambda>\lambda_s$ is fixed.
Finally we observe that this example extends naturally to an example
of a site-breeding BRW on a radial tree where the number of branches of a vertex
at distance $k$ from the root is at least $1/p(k,k+1)$.
\end{exmp}

Even though the local extinction probability ${\mathbf{q}}(\cdot,y)$ (for any fixed $y \in \mathbb{N}$)
of Examples~\ref{ex:nonstronglocalFBRW} and \ref{ex:nonstronglocalFBRW2} provides
a fixed point which is different from both $\overline {\mathbf{q}}$ and $\mathbf{1}$ for the function $G$
of an irreducible BRW, in the following example we give a more explicit construction of such a fixed point.
\begin{rem}\label{rem:Spataru}
Consider a generating function 
\begin{equation}\label{eq:explicit}
 G({\mathbf{z}}|n)=
\begin{cases}
 3 \big (p_n {\mathbf{z}}(n+1)+(1-p_n){\mathbf{z}}(n-1) \big )^2/4+1/4 & n \ge 1 \\
 3 {\mathbf{z}}(1)^2/4+1/4 & n=0,
\end{cases}
\end{equation}
on $[0,1]^\mathbb{N}$ where $p_n \in (0,1)$ for all $n \in \mathbb{N}$.
This is the generating function of an irreducible BRW and
the constant vectors $\overline {\mathbf{q}} \equiv 1/3$ and $\mathbf{1}$ 
are always fixed points of $G$ regardless of the choice of $\{p_n\}$.
An explicit construction of $\{p_n\}$ and of a third fixed point ${\mathbf{z}} \in (0,1)^X$ can be 
carried out recursively as follows. Take ${\mathbf{z}}(0) \in (1/3, 1)$,  $p_0=1$ and $p_1<5/9$.
The explicit expression of the equation $G({\mathbf{z}})={\mathbf{z}}$ is easily derived from equation~\eqref{eq:explicit}.
  Since $1>\sqrt{({4x-1})/{3}}> x$ for all $x \in (1/3,1)$ then $1>{\mathbf{z}}(1)>{\mathbf{z}}(0)>1/3$. 
Suppose that $1>{\mathbf{z}}(n) > {\mathbf{z}}(n-1) > \cdots >{\mathbf{z}}(0)>1/3$
for some $p_0, p_1, \ldots, p_{n-1} \in (0,1)$.
Choose ${\mathbf{z}}(n+1)<1$ such that $3 {\mathbf{z}}(n+1)^2/4+1/4 >{\mathbf{z}}(n)$. 
By continuity, there exists $p_n<1$ such
that $3 \big (p_n {\mathbf{z}}(n+1)+(1-p_n){\mathbf{z}}(n-1) \big )^2/4+1/4={\mathbf{z}}(n)$.
By induction we have a new fixed point ${\mathbf{z}}$ of this $G$ (associated to the sequence 
$\{p_n\}$) such that $1/3<{\mathbf{z}}(n)<{\mathbf{z}}(n+1)<1$ for all $n$
and $\lim_{n \to \infty} p_n=\lim_{n \to \infty} {\mathbf{z}}(n)=1^-$. 

This disproves \cite[Theorem 3 and Corollary 4]{cf:Spataru89}. Indeed there is a
gap in the proof of 
\cite[Theorem 3]{cf:Spataru89}: in the line 8 of the proof, the sentence ``Clearly 
$B^\prime \not = \emptyset$'' is incorrect when the set $X$ is infinite as the following example shows.
%
%
Using the notation of \cite{cf:Spataru89} take ${\mathbf{x}}, {\mathbf{t}} \in [0,1]^X$ such that
${\mathbf{x}}$ is constant, say ${\mathbf{x}}(i)=a<1$ for all $i \in X$ and $1>{\mathbf{t}}(i)>a$ for all $i \in X$. Suppose that
$\sup_{i \in X} {\mathbf{t}}(i)=1$. Then the half-line $\{{\mathbf{x}} + \theta({\mathbf{t}}-{\mathbf{x}})\colon \theta \ge 0\}$ exits from
the set $[0,1]^X$ at $\theta=1$, that is, at the  point ${\mathbf{t}}$. 
Indeed if $\theta >1$ then $\sup_{i \in X} ({\mathbf{x}}(i)+\theta({\mathbf{t}}(i)-{\mathbf{x}}(i)))
=\theta \sup_{i \in X} {\mathbf{t}}(i) -a(\theta-1)=\theta -a(\theta-1)=1+(\theta-1)(1-a)>1$.
But $B^\prime:=\{ i \in X \colon {\mathbf{t}}(i)=1\}=\emptyset$; roughly speaking, in this case 
there is not a smallest
value for $\theta \ge 0$ such that some coordinates of the point ${\mathbf{x}}+\theta({\mathbf{t}}-{\mathbf{x}})$ are $1$.  
\end{rem}

\section{Proofs}
\label{sec:proofs}

\begin{proof}[Proof of Proposition~\ref{pro:maximumprinciple}]
If $\bar {\mathbf{q}}=\mathbf{1}$ there is nothing to prove. Suppose that $\bar {\mathbf{q}}<\mathbf{1}$.
Without loss of generality we can suppose that $\bar {\mathbf{q}}(x)<1$ for all $x \in X$. Indeed,
given $x_0$ such that $\bar {\mathbf{q}}(x_0)=1$ then for all $x \in \mathcal{N}_{x_0}$ we have
$\bar {\mathbf{q}}(x)=1$. Since we defined $\widehat {\mathbf{z}}(x):=1$ whenever $\bar {\mathbf{q}}(x)=1$ we can remove
these vertices obtaining a new set $X^\prime \subseteq X$.
Consider the restricted BRW on $X^\prime$ (obtained by killing all the particles going outside $X^\prime$).
It is clear that
${\mathbf{q}}^X(x,A)\le {\mathbf{q}}^{X^\prime}(x,A)$ for all $x\in X^\prime$, $A\subseteq X^\prime$.
The generating function $G^\prime$ of the new BRW satisfies $G^\prime(({\mathbf{z}}_{|{X^\prime}})|x) \ge G({\mathbf{z}}|x)$ for all
$x \in X^\prime$, hence $G({\mathbf{z}}) \ge {\mathbf{z}}$ implies $G^\prime({\mathbf{z}}_{|{X^\prime}}) 
\ge {\mathbf{z}}_{|{X^\prime}}$ (where ${\mathbf{z}}_{|{X^\prime}}$ is ${\mathbf{z}}$ restricted to $X^\prime$).
Moreover $\widehat {\mathbf{z}}$ satisfies the conclusions of the proposition if and only 
if $\widehat{{\mathbf{z}}_{|{X^\prime}}}\equiv
 \widehat {\mathbf{z}}_{|{X^\prime}}$
does. Thus, it is enough to prove the result for the BRW restricted to $X^\prime$.

Note that $\widehat {\mathbf{z}}:=T_{\bar {\mathbf{q}}}^{-1}({\mathbf{z}})$, 
thus $G({\mathbf{z}}) \ge {\mathbf{z}}$  is equivalent to $\widehat G(\widehat {\mathbf{z}}) \ge \widehat {\mathbf{z}}$.
Hence it is enough to prove the proposition when $\mu_x(\mathbf{0})=0$ for all $x \in X$
which implies $\bar {\mathbf{q}}=\mathbf{0}$ and $\widehat {\mathbf{z}}= {\mathbf{z}}$.
Suppose that $\mathcal{N}_x$ is nonempty, ${\mathbf{z}}(y) \le {\mathbf{z}}(x)$ for all $y \in  \mathcal{N}_x$
and ${\mathbf{z}}(y_0)< {\mathbf{z}}(x)$ for some $y_0 \in  \mathcal{N}_x$.
Then, using the fact that ${\mathbf{z}} \le \mathbf{1}$ and that $\prod_{y \in X} {\mathbf{z}}(y)^{f(y)} \le {\mathbf{z}}(x)$ if $\mathcal{H}(f) \ge 1$,
we have that
${\mathbf{z}}(x) \le G({\mathbf{z}}|x) \le \sum_{f \in S_X\colon f(y_0)=0} \mu_x(f) {\mathbf{z}}(x) +\sum_{f \in S_X\colon f(y_0)>0} \mu_x(f) {\mathbf{z}}(y_0)<  {\mathbf{z}}(x)$
which is a contradiction.
As for the second part, since ${\mathbf{z}}(y) \le 1= {\mathbf{z}}(x)$ for all $y \in X$ then we have ${\mathbf{z}}(y)=1$ for all $y \in X$.
Finally, by induction we obtain the result for the set $\{y \in X\colon x \to y\}$.
\end{proof}
%
%

\begin{proof}[Proof of Theorem~\ref{rem:strongconditioned}]
 Indeed,
since $\{{\mathbf{q}}_n(\cdot,A)\}_{n \in\N}$ is non decreasing, ${\mathbf{q}}_{n}(\cdot,A)=G({\mathbf{q}}_{n-1}(\cdot, A))$ and
$\bar {\mathbf{q}}$ is the smallest fixed point of $G$, we have immediately that
\begin{equation}\label{eq:strongconditioned}
{\mathbf{q}}(\cdot, A) = \bar {\mathbf{q}}(\cdot) \Longleftrightarrow {\mathbf{q}}_0(\cdot,A) \le \bar {\mathbf{q}}(\cdot),
\end{equation}
that is, (1)$\Longleftrightarrow$(2).
Moreover
the event ``local survival in $A$ starting from $x$'' implies both ``global survival starting from $x$''
and ``visiting $A$ at least once starting from $x$'', hence
${\mathbf{q}}(x, A) = \bar {\mathbf{q}}(x)<1$ if and only if
the probability of visiting $A$ infinitely many times starting from $x$ conditioned on global survival
is $1$ and (1)$\Longleftrightarrow$(5)$\Longrightarrow$(4). Trivially (2)$\Longleftrightarrow$(3)
and (4)$\Longrightarrow$(3). This proves the equivalence.
\end{proof}

Before proving
Corollary~\ref{pro:qtransitive} and Theorem~\ref{th:Fgraph}
we need two technical lemmas.

\begin{lem}\label{lem:convexity}
 Let $(X,\mu)$ be a BRW and fix ${\mathbf{z}},v \in [0,1]^X$ such that ${\mathbf{z}}+\varepsilon v \in [0,1]^X$ for some
$\varepsilon>0$.
Then the function $t \mapsto G({\mathbf{z}}+vt|x)$ is strictly convex if and only if
\begin{equation}\label{eq:supp}
 \exists f \colon  \mu_x(f)>0 ,\, \sum_{y \in \mathrm{supp}(v)} f(y) \ge 2, \,
\mathrm{supp}({\mathbf{z}}) \cup \mathrm{supp}(v)\supseteq \mathrm{supp}(f).
\end{equation}
\end{lem}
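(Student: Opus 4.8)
The plan is to fix $x$, set $\phi(t):=G(\mathbf{z}+vt|x)$, and reduce the claim to a transparent statement about a power series with nonnegative coefficients. For each $f\in S_X$ the polynomial $P_f(t):=\prod_{y\in X}(\mathbf{z}(y)+v(y)t)^{f(y)}$ has only nonnegative coefficients, since $\mathbf{z}(y),v(y)\ge0$. Choosing any $T>0$ with $\mathbf{z}+vT\in[0,1]^X$ (for instance $T=\varepsilon$, using that $\mathbf{z},\mathbf{z}+\varepsilon v\in[0,1]^X$ and $v\ge\mathbf 0$), one has $0\le P_f(t)\le1$ on $[0,T]$ and $\sum_f\mu_x(f)\le1$, so $\phi(t)=\sum_f\mu_x(f)P_f(t)$ converges there; as all terms are nonnegative, I may freely interchange the sum over $f$ with the expansion in powers of $t$ and write $\phi(t)=\sum_{k\ge0}c_kt^k$ on $[0,T]$, where $c_k:=\sum_f\mu_x(f)\,\kappa_k(f)$, with $\kappa_k(f)$ the (nonnegative) coefficient of $t^k$ in $P_f$, and $c_kT^k\le\phi(T)<\infty$.

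First I would record that $\phi$ is convex: because $v\in[0,1]^X$ is nonnegative and every mixed second partial derivative of $G$ is nonnegative on $[0,1]^X$, one gets $\phi''\ge0$. Since near $[0,T]$ the function $\phi$ coincides with the convergent power series $\sum_kc_kt^k$, it is real-analytic there, and a convex real-analytic function is strictly convex on an interval if and only if $\phi''\not\equiv0$, i.e.\ if and only if $c_k>0$ for some $k\ge2$ (if no such $k$ exists, $\phi$ is affine; if one exists, $\phi''$ is analytic and not identically zero, hence has isolated zeros). Thus the whole lemma reduces to showing that the existence of such a $k$ is equivalent to \eqref{eq:supp}.

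The remaining, purely combinatorial step is to read off the coefficients. Note that $P_f\equiv0$ exactly when some factor $\mathbf{z}(y)+v(y)t$ vanishes identically, i.e.\ when $\mathrm{supp}(f)\not\subseteq\mathrm{supp}(\mathbf{z})\cup\mathrm{supp}(v)$; and when $P_f\not\equiv0$ it is a polynomial of degree $d_f:=\sum_{y\in\mathrm{supp}(v)}f(y)$ with \emph{strictly positive} leading coefficient $\prod_{y\in\mathrm{supp}(f)\setminus\mathrm{supp}(v)}\mathbf{z}(y)^{f(y)}\prod_{y\in\mathrm{supp}(f)\cap\mathrm{supp}(v)}v(y)^{f(y)}$. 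Since every $\kappa_k(f)\ge0$, no cancellation can occur in $c_k=\sum_f\mu_x(f)\kappa_k(f)$, so $c_k>0$ holds precisely when some $f$ with $\mu_x(f)>0$ satisfies $\kappa_k(f)>0$, which forces $\mathrm{supp}(f)\subseteq\mathrm{supp}(\mathbf{z})\cup\mathrm{supp}(v)$ and $k\le d_f$. Consequently, if \eqref{eq:supp} holds with a witness $f_0$ then $\kappa_{d_{f_0}}(f_0)>0$ and $d_{f_0}\ge2$, giving $c_{d_{f_0}}>0$ with $d_{f_0}\ge2$; conversely, if $c_k>0$ for some $k\ge2$, any $f$ contributing to $c_k$ is a witness for \eqref{eq:supp}.

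I expect no serious obstacle: the only points requiring care are the justification of the termwise rearrangement (immediate from nonnegativity) and the verification that the leading coefficient of $P_f$ is strictly positive under the support condition, which isolates exactly the degree $d_f=\sum_{y\in\mathrm{supp}(v)}f(y)$ appearing in \eqref{eq:supp}. Working with the explicit nonnegative power series on $[0,T]$, rather than with the abstract second derivative, sidesteps any convergence or analyticity subtleties.
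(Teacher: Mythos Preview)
Your proof is correct and follows essentially the same approach as the paper: both expand $G(\mathbf{z}+tv\mid x)$ as a power series in $t$ with nonnegative coefficients and observe that strict convexity is equivalent to some coefficient of $t^k$ with $k\ge 2$ being positive. The paper carries out the full binomial expansion explicitly, introducing an auxiliary $g\le f$ to index the terms and then passing through a chain of equivalent conditions, whereas you bypass this by reading off directly the degree $d_f=\sum_{y\in\mathrm{supp}(v)}f(y)$ and the (strictly positive) leading coefficient of each $P_f$; this is a cosmetic streamlining rather than a different argument.
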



\begin{proof}[Proof of Lemma~\ref{lem:convexity}]
Let us evaluate the function $G$ on the line $t \mapsto {\mathbf{z}}+tv$
where $t \in [0, T)$ and $T:=\sup\{s>0:{\mathbf{z}}+sv \in [0,1]^X\}$.

\[ 
\begin{split}
G({\mathbf{z}}+tv|x)&=
\sum_{f \in S_X} \mu_x(f) \prod_{y \in X} \sum_{i=0}^{f(y)} \binom{f(y)}{i} {\mathbf{z}}(y)^{f(y)-i}v(y)^i t^i\\
&=
\sum_{f \in S_X} \mu_x(f)  \sum_{g \in S_X\colon g \le f} \prod_{y \in X}\binom{f(y)}{g(y)} {\mathbf{z}}(y)^{f(y)-g(y)}v(y)^{g(y)} t^{g(y)}\\
&=
\sum_{f \in S_X} \mu_x(f)  \sum_{g \in S_X\colon g \le f} t^{\mathcal H(g)} \prod_{y \in X}\binom{f(y)}{g(y)} {\mathbf{z}}(y)^{f(y)-g(y)}v(y)^{g(y)}  \\
&=
\sum_{f \in S_X} \mu_x(f)  \sum_{i=0}^{\infty} \sum_{g \in S_X\colon \mathcal H (g)=i, g \le f}
t^{i} \prod_{y \in X}\binom{f(y)}{g(y)} {\mathbf{z}}(y)^{f(y)-g(y)}v(y)^{g(y)}  \\
&=
\sum_{i=0}^\infty t^{i} \left ( \sum_{f,g \in S_X\colon \mathcal H (g)=i, g \le f}
\mu_x(f)  \prod_{y \in X}\binom{f(y)}{g(y)} {\mathbf{z}}(y)^{f(y)-g(y)}v(y)^{g(y)} \right ) \\
\end{split}
\] 
The strict convexity of a power series in $t$ with nonnegative coefficients is equivalent to the strict positivity
of at least one coefficient corresponding to $t^i$ with $i \ge 2$.
Hence it is easy to show that each of the following assertions is equivalent to the next one and
that they are all equivalent to the strict
convexity of $t \mapsto G({\mathbf{z}}+vt|x)$
\begin{enumerate}
 \item $\exists f,g \colon  \mathcal H(g) \ge 2, \, f \ge g, \, \mu_x(f)>0 , \mathrm{supp}(v) \supseteq \mathrm{supp}(g), \,
\mathrm{supp}({\mathbf{z}}) \supseteq \mathrm{supp}(f-g)$;
 \item $ \exists f,g \colon  \mathcal H(g) \ge 2, \, f \ge g, \, \mu_x(f)>0 , g=f \ident_{\mathrm{supp}(v)}, \,
\mathrm{supp}({\mathbf{z}}) \supseteq \mathrm{supp}(f) \setminus \mathrm{supp}(v)$;
 \item $\exists f \colon  \mu_x(f)>0 , \sum_{y \in \mathrm{supp}(v)} f(y) \ge 2, \,
\mathrm{supp}({\mathbf{z}}) \supseteq \mathrm{supp}(f) \setminus \mathrm{supp}(v)$;
 \item $\exists f \colon  \mu_x(f)>0 , \sum_{y \in \mathrm{supp}(v)} f(y) \ge 2, \,
\mathrm{supp}({\mathbf{z}}) \cup \mathrm{supp}(v)\supseteq \mathrm{supp}(f)$;
\end{enumerate}
\end{proof}

\begin{lem}\label{lem:generationn}
 Let $(X, \mu)$ be a BRW and fix $x_0 \in X$. Suppose that for some
$\bar x$ in the same irreducible class of $x_0$ and $f \in S_X$ we have that $\mu_{\bar x}(f)>0$,
$\sum_{w\colon w \rightleftharpoons x_0} f(w) \ge 2$.
We can fix $\bar n \in\N$ such that
if the process starts with one particle at $x_0 \in X$ then we have at least 2 particles at
$x_0$ in the generation $\bar n$ wpp.
\end{lem}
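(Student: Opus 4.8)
The plan is to exhibit, starting from a single particle at $x_0$, an explicit finite genealogical scenario that deposits two particles on $x_0$ at one common generation, and to check that this scenario has positive probability; that generation will be the required $\bar n$. The whole argument lives inside the irreducible class $[x_0]$.

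First I would route a particle from $x_0$ to $\bar x$. Since $\bar x \rightleftharpoons x_0$, there is a walk $x_0 \to \bar x$, say of length $s$, so $m^{(s)}_{x_0 \bar x}>0$; as $m^{(s)}_{x_0\bar x}$ is the expected number of particles sitting at $\bar x$ at generation $s$ when starting from one particle at $x_0$, its positivity forces a positive probability of having at least one descendant at $\bar x$ at generation $s$. Conditionally on this, that particle reproduces according to $f$, which happens with probability $\mu_{\bar x}(f)>0$; since $\sum_{w\colon w \rightleftharpoons x_0} f(w)\ge 2$, at generation $s+1$ we have, with positive probability, at least two particles located at vertices $w_1,w_2$ lying in the class $[x_0]$ (possibly $w_1=w_2$).

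The heart of the matter, and the step I expect to be the main obstacle, is to drive both $w_1$ and $w_2$ back to $x_0$ at the \emph{same} generation, i.e.\ to produce a single $t$ with $m^{(t)}_{w_1 x_0}>0$ and $m^{(t)}_{w_2 x_0}>0$ simultaneously. Appending a single cycle at $x_0$ only shifts arrival times by multiples of its length, which need not reconcile the two travel times, so I would instead invoke the periodicity of the class. Restrict attention to a finite, strongly connected subgraph $H$ containing $x_0,\bar x,w_1,w_2$, taken as the union of finitely many fixed walks $\bar x \to w_i$ (of length one, since $f(w_i)>0$), $w_i \to x_0$, $x_0 \to w_i$ and $x_0 \to \bar x$; let $d$ be the period of $H$. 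For vertices $u,v$ of $H$ all walk-lengths $u\to v$ share a common residue $r(u,v)\bmod d$, and every sufficiently large $t\equiv r(u,v)\bmod d$ is realized by a walk in $H$ of length $t$. Because $\bar x \to w_1$ and $\bar x \to w_2$ both have length one, $r(\bar x,w_1)=r(\bar x,w_2)=1$, whence $r(w_1,x_0)=r(\bar x,x_0)-1=r(w_2,x_0)\bmod d$. Thus for every large $t$ in this common residue class both $m^{(t)}_{w_1 x_0}$ and $m^{(t)}_{w_2 x_0}$ are strictly positive. Fixing such a $t$, the independence of the two subtrees rooted at $w_1$ and $w_2$ yields a positive probability that each sends a descendant to $x_0$ exactly $t$ generations later, giving at least two particles at $x_0$ at generation $\bar n:=s+1+t$.

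I would close by absorbing the degenerate cases: if $w_1=w_2$ (one vertex receiving at least two children) the residue matching is unnecessary and any $t$ with $m^{(t)}_{w_1 x_0}>0$ suffices, while if $\bar x=x_0$ then $s=0$. The single point deserving care is the standard fact about finite strongly connected digraphs used above, namely the well-definedness of $r(u,v)$ modulo the period and the eventual realizability of every large admissible length; this follows from strong connectivity together with the Frobenius coin argument applied to the cycle lengths through a fixed vertex.
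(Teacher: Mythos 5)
Your proof is correct, and its skeleton coincides with the paper's: route one particle from $x_0$ to $\bar x$, let it reproduce according to $f$, and then synchronize the return of two of its children to $x_0$. The only genuine divergence is in the synchronization step. The paper splits into two cases: if $f$ puts two children on a single vertex $x_{m+1}\rightleftharpoons x_0$, both are sent back along one common path (your degenerate case $w_1=w_2$); if $f$ puts children on two distinct vertices, the paper closes two loops $x_0\to\bar x\to x_{m+1}\to\cdots\to x_0$ and $x_0\to\bar x\to y_{m+1}\to\cdots\to x_0$ of lengths $n_1,n_2$ and synchronizes by iterating them (the printed choice $\bar n=GCD(n_1,n_2)$ is evidently a slip for the least common multiple, since both lineages can sit at $x_0$ at any common multiple of $n_1$ and $n_2$). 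You instead invoke the cyclic-class structure of a finite strongly connected digraph: all walk lengths $u\to v$ agree modulo the period $d$, every large admissible length is realized, and the two one-step edges $\bar x\to w_1$, $\bar x\to w_2$ force $r(w_1,x_0)\equiv r(w_2,x_0)\pmod d$, so a single large $t$ works for both returns. This buys a uniform treatment of both cases and avoids the explicit loop-iteration bookkeeping, at the cost of importing the (standard, but not proved in the paper) structure theorem for periods of finite strongly connected digraphs; the paper's version is more elementary and self-contained. All the probabilistic glue you use --- positivity of $m^{(s)}_{x_0\bar x}$ giving a descendant at $\bar x$ wpp, independence of the reproduction at $\bar x$ from the past, and independence of the progenies rooted at the two (distinct) particles at $w_1,w_2$ --- matches the paper's explicit lower bound by a product of $\mu_{x_i}(f_i)$ terms, so no gap there.
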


\begin{proof}[Proof of Lemma~\ref{lem:generationn}]
Consider a path $x_0, x_1, \ldots, x_m=\bar x$ and let $f \in S_X$ be
such that $\mu_{\bar x}(f)>0$ and $\sum_{w\colon w \rightleftharpoons x_0} f(w) \ge 2$.
We can have two cases.

\smallskip

\noindent \textbf{(a)}.~There exists $x_{m+1} \in X$ such that $x_{m+1} \rightleftharpoons x_0$ and $f(x_{m+1}) \ge 2$; in this case consider the closed path
$x_0, x_{1}, x_{2}, \ldots, x_m, x_{m+1}, \ldots, x_n=x_0$ and take $\bar n:=n$. Since any particle at $x_i$
has at least one child at $x_{i+1}$ wpp and a particle at $\bar x$ has at least 2 children at $x_{m+1}$ wpp,
then any particle at $x_0$ has at least 2 descendants
at $x_0$ in the $\bar n$th generation.
Indeed, denote by $f_i \in S_X$ such that $\mu_{x_i}(f_i)>0$, $f_i(x_{i+1}) \ge 1$ for all $i =0, \ldots \bar n-1$
($f_m$ being $f$), then the probability  that a particle at $x_0$ has at least 2 particles at $x_0$
in the $\bar n$th generation is bounded from below by $\prod_{i=0}^m \mu_i(f_i) \prod_{j=m+1}^{\bar n-1} \mu_j(f_i)^2$.
\smallskip

\noindent \textbf{(b)}.~There exists a couple of different vertices
$x_{m+1}, y_{m+1}$ 
such that $x_{m+1}, y_{m+1}\rightleftharpoons x_0$ and \break
$f(x_{m+1}), f(y_{m+1}) \ge 1$; in
this case consider the paths $x_0, x_1, \ldots x_m, x_{m+1},
\ldots, x_{n_1}=x_0$ and $x_0, x_1, \ldots x_m, y_{m+1}, \ldots,
y_{n_2}=x_0$ and take $\bar n:= GCD(n_1,n_2)$ (the conclusion is
similar as before).
\end{proof}

\begin{proof}[Proof of Theorem~\ref{th:Fgraph}]
$(1)$. 
For every fixed point ${\mathbf{z}}$ of $G$, we know that ${\mathbf{z}} \ge \bar {\mathbf{q}}$ and ${\mathbf{z}} \le \mathbf 1_X$; this implies
that if $\sup_{x \in X} {\mathbf{z}}(x)<1$ for some fixed point then necessarily $\sup_{x \in X} \bar {\mathbf{q}}(x)<1$.
Hence, if $\bar {\mathbf{q}}=\mathbf 1$ there is nothing to prove. Otherwise,
we show that if $G({\mathbf{z}})={\mathbf{z}}$ and ${\mathbf{z}} \not = \bar {\mathbf{q}}$ then $\sup_{w \in X} {\mathbf{z}}(w)=1$.
Suppose that  the BRW is locally isomorphic to $(Y,\nu)$ through the map $g$
and define ${\mathbf{h}}(y):=\sup_{w \in g^{-1}(y)} {\mathbf{z}}(w)$.
Clearly ${\mathbf{h}} \in [0,1]^Y$ and ${\mathbf{h}} \circ g \ge {\mathbf{z}}$ which implies that
$G_Y({\mathbf{h}}) \ge {\mathbf{h}}$.
Indeed
\[
\begin{split}
G_Y({\mathbf{h}}|y)&= \sup_{x \in g^{-1}(y)}G_Y({\mathbf{h}}|g(x))=\sup_{x \in g^{-1}(y)}G({\mathbf{h}} \circ g|x) \\
&\ge
\sup_{x \in g^{-1}(y)}G({\mathbf{z}}|x)= \sup_{x \in g^{-1}(y)} {\mathbf{z}}(x) = {\mathbf{h}}(y).
\end{split}
\]
If $Y$ finite then we can choose $\widetilde y \in Y$ which minimizes
\[
 t(y):= \frac{1-\bar {\mathbf{q}}^Y(y)}{{\mathbf{h}}(y)-\bar {\mathbf{q}}^Y(y)}
\]
(where $t(y):=+\infty$ if ${\mathbf{h}}(y)=\bar {\mathbf{q}}^Y(y)$);
note that $t(y) \ge 1$ for all $y \in Y$ and $t(\widetilde y)<+\infty$.
By applying the maximum principle (Proposition~\ref{pro:maximumprinciple})
to the function $1/t(y)$ (where $y$ is ranging in the set $\{w\colon \bar {\mathbf{q}}^Y(w)<1\}$)
we have that it is constant on $\{y\colon \widetilde y \to y\}$.
Since $\bar {\mathbf{q}}^Y (\widetilde y)<1$ and $Y$ is finite,
then there exists $y_0$ such that $\widetilde y \to y_0$ and
there is local survival at $y_0$ starting from $y_0$.
Since $(Y,\nu)$ satisfies Assumption~\ref{assump:1}
then there exists $\bar y \rightleftharpoons y_0$ such that
a particle living at $\bar y$ wpp has at least 2 children in the
irreducible class of $y_0$.
Then by taking $y_0$ instead of $x_0$ in
Lemma~\ref{lem:generationn} we have that we can
find $\bar n \in \N$ such that the function
\[
\phi(t):=G_Y^{(\bar n)}(\bar {\mathbf{q}}^Y+t ({\mathbf{h}}-\bar {\mathbf{q}}^Y)|y_0)-\bar {\mathbf{q}}^Y(y_0)-t ({\mathbf{h}}(y_0)-\bar {\mathbf{q}}^Y(y_0))
\]
is strictly convex by Lemma~\ref{lem:convexity}.
Indeed $G_Y^{(\bar n)}$ is the generating function of the BRW constructed by considering
the $n$-th generations of the original BRW where $\bar n | n$ and, under our hypotheses, it satisfies
equation~\eqref{eq:supp}. 

Note that $\phi$ is well defined in $[0,t(y_0)]$ since
\[
\mathbf{r}_t(y):=\bar {\mathbf{q}}^Y(y)+t ({\mathbf{h}}(y)-\bar {\mathbf{q}}^Y(y)) \le \bar {\mathbf{q}}^Y(y)+t(y_0) ({\mathbf{h}}(y)-\bar {\mathbf{q}}^Y(y))\le 1
\]
hence $\mathbf{r}_t \in [0,1]^Y$ for all $t \in [0,t(y_0)]$.

Clearly every fixed point of $G_Y$ is a fixed point of $G_Y^{(\bar n)}$;
in particular, $G^{(\bar n)}({\mathbf{z}})={\mathbf{z}}$ and $G^{(\bar n)}_Y(\bar {\mathbf{q}}^Y)=\bar {\mathbf{q}}^Y$, whence
$\phi(0)=0$ and $\phi(1)=G^{(\bar n)}_Y({\mathbf{h}}|y_0)-{\mathbf{h}}(y_0)$.
Now, using equation~\eqref{eq:Gfunctions}, $G^{(\bar n)}_Y({\mathbf{h}}) \ge {\mathbf{h}}$
and this, in turn, implies $\phi(1) \ge 0$. 
Since $\phi$ is strictly convex we have that $\phi(t)>0$ for all $t \in (1, t(y_0)]$.
If $t(y_0)>1$ then $0 < \phi(t(y_0))=G^{(\bar n)}_Y({\mathbf{r}}_{t(y_0)}|y_0)-1$ but this is a contradiction
since ${\mathbf{r}}_{t(y_0)} \in [0,1]^Y$ and $G^{(\bar n)}_Y({\mathbf{r}}_{t(y_0)}) \in [0,1]^Y$.
In the end $t(y_0)=1$, thus $1={\mathbf{h}}(y_0)=\sup_{w \in X} {\mathbf{z}}(w)$.

$(2)$
This applies to $\mathbf{z}(\cdot)=\mathbf{q}(\cdot,x)$ for any fixed $x \in X$. If the BRW is irreducible 
$\mathbf{q}(w,x)= \mathbf{q}(w,y)$ for all $x,y,w \in X$. 
Thus, there exists $x \in X$ such that $\mathbf{q}(\cdot,x)=\bar{\mathbf{q}(\cdot)}$
if and only if  $\mathbf{q}(w,w)=\bar{\mathbf{q}(w)}$ for all $w \in X$; analogously,
there exists $x \in X$ such that $\sup_{w \in X} \mathbf{q}(w,x)=1$
if and only if  $\sup_{w \in X} \mathbf{q}(w,w)=1$.

\end{proof}
Note that, from the first part of the previous proof, 
if the BRW on $Y$ is irreducible then by the maximum principle 
we have
that $(\mathbf{h}-\bar {\mathbf{q}}^Y)/(\mathbf{1}-\bar {\mathbf{q}}^Y)$ is a constant function, thus 
$\mathbf{h}(y)=\sup_{w \in g^{-1}(y)} {\mathbf{z}}(w)=1$ for all $y \in Y$.

\begin{proof}[Proof of Corollary~\ref{cor:finiteBRW}]
 
When $X$ is finite, $(X, \mu)$ is clearly an $\mathcal{F}$-BRW.
If $\bar {\mathbf{q}}=\mathbf{1}$ 
there is nothing to prove. Suppose that $\bar {\mathbf{q}}<\mathbf{1}$,  since the BRW is irreducible
we have that $\bar {\mathbf{q}}(x)<{1}$ for all $x \in X$. 
Let $\bar{\mathbf{q}}<\mathbf{z}<\mathbf{1}$ be a solution of $G(\mathbf{z}) \ge \mathbf{z}$.
Since $X$ is finite and $\bar{\mathbf{q}}<\mathbf{z}$ from Theorem~\ref{th:Fgraph}(1) we have 
that $\mathbf{z}(x)=1$ for some $x \in X$. By Proposition~\ref{pro:maximumprinciple}, using irreducibility, 
$\widehat {\mathbf{z}}= \mathbf{1}$ which contradicts $\mathbf{z}<\mathbf{1}$.

\end{proof}

\begin{proof}[Proof of Corollary~\ref{pro:qtransitive}]
Since  $(X, E_\mu)$ is irreducible we have that ${\mathbf{q}}(x,y)={\mathbf{q}}(x,x)$ for all $x,y \in X$ and
if $\bar {\mathbf{q}}<\mathbf 1$ (resp.~${\mathbf{q}}(\cdot,y)<\mathbf 1$)
then $\bar {\mathbf{q}}(x) <1$ (resp.~${\mathbf{q}}(x,y)<1$) for all $x \in X$.
Moreover, quasi transitivity implies that if ${\mathbf{q}}(\cdot,y)<\mathbf 1$ then
$\sup_{x \in X} {\mathbf{q}}(x,y) <1$.
Thus, according to Theorem~\ref{th:Fgraph}, ${\mathbf{q}}(\cdot,y) \not = \mathbf 1$
implies ${\mathbf{q}}(\cdot,y)=\bar {\mathbf{q}}$.
\end{proof}

\section*{Acknowledgments}
The authors are grateful to the anonymous referee for carefully reading the manuscript and for useful suggestions
which helped to improve the paper.


\begin{thebibliography}{99}



\bibitem{cf:AthNey}
K.B.~Athreya,  P.E.~Ney,
Branching processes,
Die Grundlehren der mathematischen Wissenschaften, \textbf{196},
Springer-Verlag, 1972.

\bibitem{cf:BL} L.~Belhadji, N.~Lanchier, Individual versus cluster recoveries within a
spatially structured population,  Ann.~Appl.~Probab.~\textbf{16}  (2006),  no.1, 403--422.

%
\bibitem{cf:BBZ}
 L.~Belhadji, D.~Bertacchi, F.~Zucca,
A self-regulating and patch subdivided population,
Adv.~Appl.~Probab.~\textbf{42} n.3 (2010), 899--912.

\bibitem{cf:BLZ}
D.~Bertacchi, N.~Lanchier, F.~Zucca,
Contact and voter processes on the infinite
percolation cluster as models of host-symbiont interactions,
Ann.~Appl.~Probab.~\textbf{21} n.~4 (2011), 1215--1252.

\bibitem{cf:BPZ}
D.~Bertacchi, G.~Posta, F.~Zucca,
        Ecological equilibrium for restrained random walks,
        Ann.~Appl.~Probab.~\textbf{17} n.~4 (2007), 1117--1137.


\bibitem{cf:BZ}
D.~Bertacchi, F.~Zucca,
Critical behaviors and critical values of branching random walks
on multigraphs, J.~Appl.~Probab.~\textbf{45} (2008), 481--497.

\bibitem{cf:BZ2}
D.~Bertacchi, F.~Zucca,
Characterization of the critical values of branching random walks on
weighted graphs through infinite-type branching processes,
J.~Stat.~Phys.~\textbf{134} n.~1 (2009), 53--65.

\bibitem{cf:BZ3}
D.~Bertacchi, F.~Zucca,
Approximating critical parameters of branching random walks,
J.~Appl.~Probab.~\textbf{46} (2009), 463--478.

\bibitem{cf:BZ4}
D.~Bertacchi, F.~Zucca,
Recent results on branching random walks, 
Statistical Mechanics and Random Walks: Principles, Processes and Applications, 
Nova Science Publishers (2012), 289-340.

%
%
\bibitem{cf:Big1977}
J.D.~Biggins,
Martingale convergence in the branching random walk,
J.~Appl.~Probab.~\textbf{14}  n.~1 (1977), 25--37.

\bibitem{cf:Big1978}
J.D.~Biggins,
The asymptotic shape of the branching random walk,
Adv.~Appl.~Probab.~\textbf{10} n.~1 (1978), 62--84.


\bibitem{cf:BigKypr97}
J.D.~Biggins, A.E.~Kyprianou,
Seneta-Heyde norming in the branching random walk,
Ann.~Probab.  \textbf{25} n.~1 (1997), 337--360.

\bibitem{cf:BigRah05}
J.D.~Biggins, A.~Rahimzadeh Sani,
Convergence results on multitype, multivariate branching random walks,
Adv.~Appl.~Probab.  \textbf{37} n.~3 (2005), 681--705.

%
%
%

\bibitem{cf:GW1875}
F.~Galton, H.W.~Watson, On the probability of the extinction of
families, Journal of the Anthropological Institute of Great Britain and Ireland \textbf{4} (1875), 138--144.

\bibitem{cf:GMPV09}
N.~Gantert, S.~M\"uller, S.Yu.~Popov, M.~Vachkovskaia,
Survival of branching random walks in random environment,
J.~Theoret.~Probab.~\textbf{23} (2010), no.~4, 1002--1014. 
%

\bibitem{cf:Harris63}
T.E.~Harris, The theory of branching processes, Springer-Verlag, Berlin, 1963.


\bibitem{cf:HuLalley}
I.~Hueter, S.P.~Lalley, {Anisotropic branching random walks
on homogeneous trees},  Probab.~Theory Related Fields  \textbf{116},
(2000),  n.1, 57--88.

%

\bibitem{cf:Ligg1}
T.M.~Liggett,
{Branching random walks and contact processes on homogeneous trees},
Probab.~Theory Related Fields  \textbf{106},  (1996),  n.4, 495--519.

\bibitem{cf:Ligg2}
T.M.~Liggett,
{Branching random walks on finite trees},
Perplexing problems in probability,  315--330, Progr.~Probab.,
\textbf{44}, Birkh\"auser Boston, Boston, MA, 1999.

%
%
%
%

\bibitem{cf:MadrasSchi}
N.~Madras, R.~Schinazi, {Branching random walks on trees},
Stoch.~Proc.~Appl.~\textbf{42},  (1992),  n.2, 255--267.
%

\bibitem{cf:MenshikovVolkov}
M.~V.~Menshikov, S.~E.~Volkov, Branching Markov chains: Qualitative characteristics,
Markov Proc.~and~rel.~Fields.~\textbf{3} (1997), 225–241.

\bibitem{cf:MountSchi}
T.~Mountford, R.~Schinazi, A note on branching random walks on
finite sets, J.~Appl.~Probab.~\textbf{42} (2005),  287--294.

%

\bibitem{cf:Muller08-2}
S.~M\"uller, Recurrence for branching Markov chains,
Electron.~Commun.~Probab.~\textbf{13} (2008), 576--605.

%
%
\bibitem{cf:PemStac1}
R.~Pemantle, A.M.~Stacey, {The branching random walk and
contact process on Galton--Watson and nonhomogeneous trees},
Ann.~Prob.~\textbf{29}, (2001),
 n.4, 1563--1590.
%
%
%
\bibitem{cf:Spataru89}
A.~Spataru,
Properties of branching processes with denumerable many types,


\bibitem{cf:Stacey03}
A.M.~Stacey, {Branching random walks on quasi-transitive graphs},
Combin.~Probab.~Comput.~\textbf{12}, (2003), n.3
  345--358.

\bibitem{cf:Woess}
        W.~Woess, {Random walks on infinite graphs and groups},
        Cambridge Tracts in Mathematics, {\textbf 138},
    Cambridge Univ.~Press, 2000.
%
%

\bibitem{cf:Z1}
F.~Zucca,
Survival, extinction and approximation of discrete-time branching random walks,
J.~Stat.~Phys., \textbf{142} n.4 (2011), 726--753.


\end{thebibliography}
\end{document}